\documentclass[reqno]{amsart}

\usepackage[utf8]{inputenc}
\usepackage[OT2,T1]{fontenc}
\DeclareSymbolFont{cyrletters}{OT2}{wncyr}{m}{n}
\DeclareMathSymbol{\Sha}{\mathalpha}{cyrletters}{"58}
\usepackage{graphicx}

\usepackage[hyphens,spaces,obeyspaces]{url}
\usepackage[allcolors=blue,hyperindex]{hyperref}
\hypersetup{
           breaklinks=true,   
           colorlinks=true,   
        }

\usepackage{orcidlink}

\usepackage{pgfplots}
\pgfplotsset{compat=1.18}

\usepackage{mathrsfs}
\title{Positive kernels in Lie group representations and Fourier interpolation} 
\date{\today}
\usepackage[foot]{amsaddr}
\usepackage{comment}

\author[Peter Vang Uttenthal]{Peter Vang Uttenthal \orcidlink{0009-0001-0878-8213}}
\address{Department of Mathematics, Aarhus University, Ny Munkegade 118, 1530-421, DK-8000
Aarhus C, Denmark}
\email{petervang@math.au.dk}

\newcommand{\GL}{\operatorname{GL}}
\newcommand{\SL}{\operatorname{SL}}

\newcommand{\Z}{\mathbb{Z}}

\newcommand{\N}{\mathbb{N}}
\newcommand{\R}{\mathbb{R}}
\renewcommand{\C}{\mathbb{C}}

\usepackage{amsthm,amsmath, mathrsfs, mathtools}
\usepackage{amsfonts}
\usepackage{amssymb}
\usepackage{fancyhdr}
\usepackage{IEEEtrantools}
\usepackage{tikz-cd}
\usepackage[english]{babel}
\usepackage[utf8]{inputenc}
\usepackage{csquotes}

\newtheorem{theorem}{Theorem}
\numberwithin{theorem}{section}
\newtheorem{lemma}{Lemma}
\numberwithin{lemma}{section}

\numberwithin{definition}{section}
\newtheorem{proposition}{Proposition}
\numberwithin{proposition}{section}
\newtheorem{corollary}{Corollary}
\numberwithin{corollary}{section}
\newtheorem{remark}{Remark}
\numberwithin{remark}{section}

\numberwithin{conjecture}{section}

\numberwithin{question}{section}

\numberwithin{example}{section}

\subjclass[2020]{43A35}
\begin{document}

\begin{abstract}
For a given function $f: \Gamma \to \C$
on a discrete subspace $\Gamma$ of a noncompact Riemannian symmetric space $X=G/K$,
we construct analytic, noncompactly supported functions $W$ on $G/K$
with $W|_{\Gamma}=f$ that, in addition,
satisfy an invariant differential equation on $G/K$.
The key step is to regard $W$ as an orthogonal projection onto a space of coherent states in a representation of the Lie group $G$
in a Hilbert space that admits a strictly positive definite reproducing kernel.
The techniques are implemented for the heat kernel on $G/K$ and $W$
is expressed in terms of certain closed formulas for linearly independent coherent states.
\end{abstract}

\maketitle
\setcounter{tocdepth}{4}
{
\tableofcontents
}
\section{Introduction}

The classical extension problem of Whitney \cite{Whitney} asks for a continuously differentiable function
$W$ that coincides with the values of some given $f$ on a topological subspace $\Gamma$ of a smooth manifold $X$. 
The approach of the present work is to give new solutions to a class of extension problems of Whitney type by further utilizing the group of symmetries of the ambient of space $X$ of $\Gamma$.
For concreteness,
a list of desirable properties of $W$ is prescribed in advance: not only should $W$ restrict to $f$ on $\Gamma$, but we shall also insist that
$W$ belongs to certain space $\mathscr{L}(X)$ 
of $G$--invariant functions on $X$ 
for a reductive Lie group $G$.
Naturally, the properties of $W$
will be 
shared by the functions in $\mathscr{L}(X)$. We will carry out the details 
in the setting of the heat equation on noncompact Riemannian symmetric spaces. In general, our techniques do not require that $\mathscr{L}(X)$ be a Hilbert space, nor do we require the action of $G$ to be admissible for our arguments to go through. Nevertheless, the results draw mainly upon the representation theory of $G$ and the harmonic analysis on the associated symmetric space $G/K$.

\begin{theorem} Let $G$ be a reductive Lie group in the Harish-Chandra class with a maximal compact subgroup $K$, and suppose 
$X=G/K$ is a noncompact Riemannian symmetric space.
For a given family $(f_t)_{t>0}$ of analytic functions $f_t \in \mathscr{C}^\omega(X)$
and an infinite discrete space $\Gamma \subseteq X$,
there exists a family $(W_t)_{t > 0}$ 
such that 
\begin{enumerate}
\item $W_t \in \mathscr{C}^\omega(X)$ for all $t>0$,\\
\item $W_t(\gamma) = f(\gamma)$ for all $\gamma \in \Gamma$ and all $t>0$,\\
\item $\partial_t W_t(x) = \Delta W_t(x)$ for
all $x\in X$ and all $t>0$.
\end{enumerate}
Fixing $t>0$,
there exists for every $x\in X$
an open neighborhood $U$ of $x$
such that for every $z\in U$, 
\[
W_t(z) =\sum_{\gamma \in \Gamma_U} a_\gamma p_t(z\gamma^{-1})
\]
where $p_t$ is the heat kernel on $G/K$, $\Gamma_U :=\Gamma \cap U$,
and the constants $a_\gamma$ depend only on $U$.
If $G$ is a complex Lie group then in fact 
\[
W_t(z) = \sum_{\gamma \in \Gamma_U}
a_\gamma 
(4\pi t)^{-n/2}e^{-|\rho|^2 t - |H(z\gamma^{-1})|^2/4t} \prod_{\alpha \in \Delta_+} \bigg( \frac{\operatorname{sinh}\langle \alpha, H(z\gamma^{-1}) \rangle }{\langle \alpha, H(z\gamma^{-1}) \rangle}\bigg)^{-1/2}.
\]
\end{theorem}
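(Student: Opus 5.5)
The plan is to realise each $W_t$ as an orthogonal projection onto a finite span of coherent states built from the heat kernel. For fixed $t>0$, the heat semigroup $e^{t\Delta}$ acting on $L^2(X)$ has range $\mathscr{H}_t = e^{t\Delta}L^2(X)$. With the norm transported from $L^2$, this is a $G$-invariant Hilbert space of real-analytic functions (the Segal--Bargmann type space of Kr\"otz--\'Olafsson--Stanton). Its reproducing kernel is
\[
K_t(z,w)=p_{2t}(w^{-1}z).
\]
The coherent states are $k_{w}=p_{2t}(\cdot\, w^{-1})$, or with the normalisation in the statement, $p_t(\cdot\,\gamma^{-1})$ after rescaling $t$. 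I will write everything with $p_t$, so the relevant semigroup parameter is $t/2$. The central claim is that this kernel is \emph{strictly} positive definite on $X$: for distinct points $\gamma_1,\dots,\gamma_N$, the functions $p_t(\cdot\,\gamma_j^{-1})$ are linearly independent. I would prove this through the spherical transform. We have
\[
\sum_j c_j p_t(z\gamma_j^{-1}) = e^{t\Delta}\Big(\sum_j c_j\delta_{\gamma_j}\Big),
\]
and by the Helgason Fourier transform this equals
\[
\int e^{-t(|\lambda|^2+|\rho|^2)}\Big(\sum_j c_j e^{(-i\lambda+\rho)A(\gamma_j,b)}\Big)e^{(i\lambda+\rho)A(z,b)}\,d\lambda\,db.
\]
Injectivity of the Fourier transform then forces $\sum_j c_j e^{(-i\lambda+\rho)A(\gamma_j,b)}\equiv0$. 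For distinct $\gamma_j$ this is impossible: choose $b$ such that the horocycle brackets $A(\gamma_j,b)$ are pairwise distinct, and then exponentials with distinct frequencies are linearly independent in $\lambda$.

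Given this, I fix $t$ and $x\in X$. Since $\Gamma$ is discrete, I choose a relatively compact open $U\ni x$, so that $\Gamma_U$ is finite. The Gram matrix $M=(p_t(\gamma\gamma'^{-1}))_{\gamma,\gamma'\in\Gamma_U}$ is then strictly positive definite and hence invertible. I set $(a_\gamma)=M^{-1}(f_t(\gamma))_\gamma$ and
\[
W_t^U(z)=\sum_{\gamma\in\Gamma_U}a_\gamma p_t(z\gamma^{-1}).
\]
This function is the orthogonal projection, in the reproducing kernel Hilbert space, of any interpolant onto $\operatorname{span}\{k_\gamma\}$, and it interpolates $f_t$ on $\Gamma_U$ by construction. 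Analyticity in $z$ follows because $p_t$ is real-analytic. The heat equation holds because each $z\mapsto p_t(z\gamma^{-1})$ solves $\partial_t=\Delta$. Here the $t$-dependence of $a_\gamma$ must be handled: I would instead take a fixed $t_0$ and define the family by evolving in $t$, or interpret (3) locally with $a_\gamma$ fixed. The honest version takes $a_\gamma$ determined at each $t$, and then (3) requires choosing the $f_t$ compatibly. I expect this to be the main tension, and would resolve it by reading (2) as the statement ``$W_t=f$ on $\Gamma$'' for $f=f_{t_0}$, with $a_\gamma$ constant in $t$ only on the local piece.

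To obtain a global $W_t$ on all of $X$, I would exhaust $X$ by balls $B_n$ and use the local formulas above; the statement only asserts the local representation, so this is consistent with the construction. The hardest genuine step is the strict positivity of the kernel, i.e.\ the linear independence of translated heat kernels. I would attempt it first via the Fourier/horocycle argument above. Failing that, I would use the Gaussian decay of $p_t$: at the point $\gamma_{j}$ farthest from a suitable base point, in a well-chosen direction, the term $p_t(z\gamma_j^{-1})$ dominates the others as $z\to\infty$ along that direction. Finally, for complex $G$, I would insert the known closed formula of Gangolli for the heat kernel,
\[
p_t(\exp H)=(4\pi t)^{-n/2}e^{-|\rho|^2t-|H|^2/4t}\prod_{\alpha\in\Delta_+}\frac{\langle\alpha,H\rangle}{\sinh\langle\alpha,H\rangle},
\]
written with the normalisation displayed in the statement and evaluated at $H=H(z\gamma^{-1})$, the Cartan projection. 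This yields the second displayed formula directly.
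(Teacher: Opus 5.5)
\textbf{Verdict: genuine gap.} The proposal never produces a single global $W_t$ satisfying (1)--(3), and as written the statement cannot be reached this way.

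\textbf{The local step is sound.} It is the same as the paper's: strict positive definiteness of $p_t$ on $X$, then inversion of the Gram matrix on the finite set $\Gamma\cap U$. Your Helgason-transform proof of linear independence is more careful than the paper's. The paper passes through the spherical transform and simply asserts that $e^{(t/2)\Delta}\sum_k c_k\delta_{x_k}=0$ forces $\sum_k c_k\delta_{x_k}=0$. You still need two small additions:
\begin{enumerate}
\item why a generic $b\in K/M$ separates the finitely many $A(\gamma_j,b)$: for $x\neq y$, the map $b\mapsto A(x,b)-A(y,b)$ is real-analytic and not identically zero;
\item that finiteness of $\Gamma_U$ for relatively compact $U$ requires $\Gamma$ to be closed.
\end{enumerate}

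\textbf{The gap is the global step.} ``Exhaust $X$ by balls and use the local formulas'' does not define a function. The interpolant built on $\Gamma\cap B_{n+1}$ does not restrict to the one built on $\Gamma\cap B_n$, whereas (1)--(3) are claims about one global $W_t$.

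The paper does this step with Proposition~\ref{glue}, but that does not rescue it. The lemma preceding it, that $g$ is locally constant, is false: on a connected $M$ it would force $g$ to be constant. For the cover of $\R$ by $(n-1,n+1)$ one has $g(0)=1$ but $g=2$ on $(0,1)$. So the averaged $W$ is in general not even continuous. An honest partition of unity gives only a $C^\infty$ interpolant, which has neither the heat equation nor the local heat-kernel form.

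In fact no construction of this shape can work. Suppose $W_t$ is real-analytic on the connected space $X$ and agrees on a nonempty open $U$ with a finite sum $\sum_{\gamma\in\Gamma_U}a_\gamma p_t(\cdot\,\gamma^{-1})$, as in your construction. Then the identity theorem makes the equality global. A finite sum of heat kernels tends to $0$ at infinity, so it cannot equal $1$ on an infinite closed discrete $\Gamma$; take $f_t\equiv 1$.

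\textbf{The $t$-dependence you flagged is also fatal, not a matter of interpretation.} On a fixed $U$, (3) gives $\sum_{\gamma}a_\gamma'(t)\,p_t(\cdot\,\gamma^{-1})=0$, so by linear independence the $a_\gamma$ are constant in $t$. Then (2) for all $t$ requires $t\mapsto f_t|_{\Gamma_U}$ to be exactly $t\mapsto \sum_\gamma a_\gamma p_t(\gamma'\gamma^{-1})$. A $t$-independent $f$ must then vanish on $\Gamma_U$, because $0<p_t\le p_t(e)\to 0$ as $t\to\infty$.

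The paper's computation on $\R$ has the same defect: its $a_n$ depend on $t$, yet it asserts $\partial_t W=\Delta_x W$. Freezing the $a_\gamma$ at $t_0$ proves a different statement. What your argument does establish is the local, fixed-$t$ result: for each relatively compact $U$ there is an analytic combination of heat kernels interpolating $f_t$ on $\Gamma_U$.

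\textbf{The complex-group formula does not follow ``directly''.} The Gangolli formula you quote, with $\prod_{\alpha\in\Delta_+}\alpha(H)/\sinh\alpha(H)$ to the first power, is the standard one; on $\mathbb{H}^3$ it gives $(4\pi t)^{-3/2}\frac{r}{\sinh r}e^{-t-r^2/4t}$. The displayed formula instead has this product to the power $1/2$. That matches only if each root is counted with its multiplicity $m_\alpha=2$, so you should say which reading you use. As you note, $H(z\gamma^{-1})$ there must be the Cartan projection, not the Iwasawa projection the paper defines.
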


Next, we will discuss the relation of our work to the atomic decompositions in representation theory, due to Christensen--\'{O}lafsson \cite{atomic} and others, and the Whittaker--Shannon cardinal series expansion.
Let $G$ be a reductive Lie group with a maximal compact subgroup $K$ and consider a Riemannian 
symmetric space $X \simeq G/K$. 
Let $\mathscr{H}^{(\nu)}$ be a Bergman space of holomorphic functions on $X$ for some parameter $\nu$.
The space $\mathscr{H}^{(\nu)}$ admits a reproducing kernel $K: X\times X\to \C$
with the property that 
\begin{equation}
\label{K}
f(z) = \int_X f(x) \overline{K}(x,z) dx
\end{equation}
for all 
$f\in \mathscr{H}^{(\nu)}$. 
For a countable set of points $\Gamma \subset X$, 
the atomic decomposition  
\[
f(z) = \sum_{\gamma \in \Gamma} \lambda_\gamma(f) e_{\gamma}(z) \quad (z\in X).
\]
of a holomorphic function $f$ on $X$ \cite{atomic}
replaces
the kernel $K$ in (\ref{K}) 
by a sum over 
a countable family of 
coherent states $e_z := K(\cdot, z)$ on $X$. In order
to calculate the coefficients $\lambda_\gamma(f)$, however, one must know the function $f$ on the entire bounded symmetric domain $X$.

\noindent In this work, we use techniques from the representation theory of Lie groups on reproducing kernel Hilbert spaces 
to construct
analytic extensions
\begin{equation}\label{W}
W(z) = \sum_\gamma a_\gamma e_{\gamma}(z) \quad (z\in X)
\end{equation}
over countable collections of coherent states $e_{\gamma}$ within any Hilbert space of functions on a symmetric space with a strictly positive definite reproducing kernel. 
By definition, a kernel function $K$ on $X\times X$ is positive definite if the matrix
$(K(z_i,z_j))_{i,j}$ is positive definite for all finite subsets of distinct points in $X$.
The perspective adopted below differs from \cite{atomic} in that
the given function $f$ does not need to be known everywhere on $X$ to compute the coefficients $a_\gamma$ in \eqref{W}. In contrast, we only require the values of the
function $f$ at a finite or a countable collection of points in $X$. 

Our perspective, in this sense, is closer to that in the Whittaker--Shannon expansion from the theory of wavelets \cite[Theorem 2.23, p. 118]{wavelets}, 
which states that any Paley-Wiener function $f$ on $\R$
of a fixed bandwidth $\delta>0$ is
completely determined by its values at the points $\Gamma = \pi \Z$
via the series expansion
\begin{equation} \label{sinus}
f(z) = \sum_{k\in \Z} f(k\pi/\delta) \operatorname{sinc}(\delta z-k\pi).    
\end{equation}
In Section \eqref{PW}, the identity \eqref{sinus} is interpreted in a positive definite reproducing kernel space; however,
our methods work on general Riemannian symmetric spaces $X$
without restrictions on the bandwidth of the functions or the spacing between the points.
For some striking recent generalizations of \eqref{sinus}
to the space of all Schwartz functions, see \cite{Venkatesh} and \cite{Via}.
I am curious if the techniques developed in \cite{Venkatesh} using the cohomology of arithmetic groups $\Gamma$
in $G=\SL_2(\R)$ and its metaplectic covers
can be adapted to mesh well with the point of view of positive definite kernels adopted here.

\section{Acknowledgments}
 I would like to express my gratitude to 
 Bent {\O}rsted and Birgit Speh for sharing their ideas on the subject of this paper. 
The present work is
supported by Villum Fonden (VIL54509).

\section{Positive kernels} \label{sec:pre}
A function $\psi: \R \to \mathbb{C}$ is said to be positive semidefinite if the inequality
\begin{equation} \label{positivesemi}
\sum_{ i,j } c_i\overline{c}_j \psi(x_i-x_j) \geq 0
\end{equation}
holds for all tuples of distinct points $x_1,\ldots, x_d \in \R$ and all 
scalars $c_1,\ldots, c_d \in \mathbb{C}$. 
We will say that $\psi$ is strictly positive definite if the inequality \eqref{positivesemi} is sharp for all choices of $x_j \in X$ and $c_j\in \C$.
The connection between positive definite function theory and representation theory is that \emph{unitary} representations are characterized by their matrix coefficients being positive definite functions; for more details, see for instance \cite[Chapter 3.2]{dinakar}.

\subsection{Orthogonal projections} Let $G$ be a noncompact reductive Lie group in the Harish-Chandra class
with maximal compact subgroup $K$, and suppose
$G/K$ is a Riemannian symmetric space of noncompact type. 
Fix  a unitary irreducible representation $(\pi, \mathscr{H}_\pi)$ 
of $G$ 
on a Hilbert space $\mathscr{H}_\pi$ of functions on $X:=G/K$.
For a fixed $K$--spherical vector $v_0 \in \mathscr{H}_\pi^K$,
the orbit map 
\[
g \longmapsto \pi(g)v_0 
\]
descends to an embedding 
\[
\begin{tikzcd}
\Phi:  G/K \arrow[r, hook] & \mathscr{H}_\pi  
\end{tikzcd}
\]
of the symmemtric space $G/K$ in the generally infinite-dimensional representation space $\mathscr{H}_\pi$.
Suppose $K: X \times X \to \C$ is a strictly positive definite reproducing kernel for
$\mathscr{H}_\pi$.
Let $\Gamma$ be a finite subset of $G/K$, let 
$e_\gamma = K( \cdot,\gamma)$, and 
define the space
\[
V_\Gamma = \operatorname{span}\{e_\gamma \in \mathscr{H}_\pi: \gamma \in \Gamma\}.
\]
For 
$
K_{ij} = \langle e_{\gamma_i}, e_{\gamma_j} \rangle_{\mathscr{H}_\pi}  = K(\gamma_i,\gamma_j),
$
the matrix $(K_{ij})$ is
invertible if $K$ is strictly positive definite, 
and we let $K^{ij} := (K^{-1})_{ij}$.
For any given function $f: \Gamma \to \C$,
the orthogonal projection onto $V_\Gamma$ given by
\[
W(x) = \sum_{\gamma_i \in \Gamma} \bigg( \sum_{\gamma_j \in \Gamma} 
f(\gamma_j) K^{ij}  \bigg)  
e_{\gamma_i}(x) \quad (x\in G/K)
\]
is a member of $\mathscr{H}_\pi$
such that $W(x) = f(x)$ for all $x\in \Gamma$.
The factors $(K^{ij})$ 
can be viewed as corrections for the nonorthogonality of $e_\gamma \in V_\Gamma$.

\subsection{Semigroups} Let $(S, \circ, \ast)$ be a topological semigroup with a continuous involution $(s \mapsto s^\ast)$ 
such that $s^{\ast\ast}=s$ and 
$(s \circ t)^\ast = t^\ast \circ s^\ast$ for all $s,t\in S$.
Then $\varphi: S\to \C$ is said to be a positive definite function 
if $K_\varphi(s,t):=\varphi(s^\ast \circ t)$ is a positive definite kernel on $S\times S$, and we let $\mathscr{P}(S)$
be the cone of of positive definite functions on $S$. 
If $S$ is an abelian group and $\varphi \in \mathscr{P}(S)$, 
and if the involution on $S$ is $g^\ast = -g$, then $\varphi$ 
is said to be positive definite in the group sense;
if $g^\ast = g$ then $\varphi$
is said to be positive definite in the semigroup sense.

Let $\mathscr{P}_{(\geqslant)}$ be the convex cone of positive semidefinite kernels $\varphi: X\times X \to \C$.
Equip $\mathscr{P}_{(\geqslant)}$ with the topology of pointwise convergence, and let
$\mathscr{P}$ be the closed convex subcone 
of strictly positive definite elements of 
$\mathscr{P}_{(\geqslant)}$.

\begin{lemma} \label{Schur}
The cone $\mathscr{P}$
is closed under pointwise multiplication:
If $
\varphi,\varphi'$ are strictly positive definite functions on 
an involution semigroup $X$, 
then the function $\varphi \varphi'$ is strictly positive definite, where
$(\varphi\varphi')(x):= \varphi(x) \varphi'(x)$ for $x\in X$. 
\end{lemma}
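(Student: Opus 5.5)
The plan is to reduce the statement to the Schur product theorem for matrices, together with its strict version. Fix distinct points $x_1,\ldots,x_d$ in $X$ and form the kernel matrices
\[
A = \bigl(\varphi(x_i^\ast \circ x_j)\bigr)_{i,j}, \qquad B = \bigl(\varphi'(x_i^\ast \circ x_j)\bigr)_{i,j}.
\]
The matrix for the product $\varphi\varphi'$ is exactly the Hadamard product $A\circ B$, with entries $A_{ij}B_{ij}$. So it suffices to show that the Hadamard product of two positive definite Hermitian matrices is positive definite. By hypothesis $A$ and $B$ are both strictly positive definite, because the points are distinct.

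First I would show that $A\circ B$ is positive semidefinite.

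1. Diagonalize $B=\sum_k \lambda_k u_k u_k^\ast$, with $\lambda_k>0$ and $(u_k)$ an orthonormal basis of $\C^d$.
2. Since $(u u^\ast)$ enters the Hadamard product as a rank-one factor, $A\circ (u u^\ast) = D_u A D_u^\ast$, where $D_u=\operatorname{diag}(u)$.
3. For $c\in\C^d$ this gives
\[
c^\ast (A\circ B) c = \sum_k \lambda_k\, (D_{u_k}^\ast c)^\ast A\, (D_{u_k}^\ast c),
\]
which is a sum of nonnegative terms. Note that $D_{u_k}^\ast c$ is the vector $(\overline{u_{k,i}} c_i)_i$.

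Next comes strictness, which I expect to be the main step. Suppose the quadratic form vanishes at some $c$. Since $A$ is strictly positive definite and every $\lambda_k>0$, each vector $D_{u_k}^\ast c$ must be zero. That means $\overline{u_{k,i}}\,c_i=0$ for all $i$ and $k$. Fix an index $i$. Because $(u_k)$ is a basis, some $u_k$ has $u_{k,i}\neq 0$; otherwise every $u_k$ would be orthogonal to the standard basis vector $e_i$. Hence $c_i=0$ for every $i$, so $c=0$, and the inequality is strict.

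Two bookkeeping points remain. The first is Hermitian symmetry. With $(s\circ t)^\ast=t^\ast\circ s^\ast$ and $s^{\ast\ast}=s$, we have $(x_i^\ast\circ x_j)^\ast = x_j^\ast\circ x_i$. The Hermitian property of $A$ and $B$ is then part of what positive definiteness means for complex quadratic forms, since a complex matrix whose form is real for every $c$ is Hermitian. This justifies the spectral decomposition above. The second point is that the argument is uniform in $d$ and in the chosen points, so $\varphi\varphi'\in\mathscr{P}$.
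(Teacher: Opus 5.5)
Your proof is correct and follows the same route as the paper. Both arguments split one kernel matrix into rank-one pieces, rewrite the Hadamard quadratic form as a sum of quadratic forms of the other matrix at the entrywise-rescaled vectors $D_{u_k}^\ast c$, and then use strict positivity to force $c=0$. Your version keeps the sum over the rank-one pieces outside the quadratic form, which is the correct bookkeeping: the paper's displayed identities place $\sum_\ell$ inside each factor, which would introduce cross terms, so your write-up is in fact the cleaner one.
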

In the positive semidefinite case, at least, Lemma \ref{Schur} goes back to  \cite[p. 14]{schur}.
\begin{proof}
Let $\{x_k \in X: 1\leq k \leq m\}$
and set $a_{jk}=\varphi(x_k^\ast x_j)$
and $a'_{jk}=\varphi'(x_k^\ast x_j)$.
Then $A = (a_{jk}) = B B^\ast$
and $A' = (a'_{jk}) = B'(B')^\ast$
for some $B, B' \in \GL_m(\C)$.
Choose functions $\eta_\ell: \{1..m\}\to \C$ for $1 \leq \ell \leq m$
such that
\[
a'_{jk} = \sum_{\ell=1}^m \eta_\ell(j)\bar{\eta}_\ell(k)
\]
for all $1\leq j,k \leq m$.
For $(c_j)_{j} \neq 0$, the strict positivity of $A=(a_{jk})$ implies
\begin{align*}
    \sum_{j,k} c_j \bar{c}_k a_{jk} a'_{jk} &=
    \sum_{j,k} \bigg(c_j \sum_\ell \eta_\ell(j) \bigg) a_{jk} \bigg( \overline{ c_k \sum_\ell \eta_\ell(k)}\bigg) > 0
\end{align*}
unless $ c_j \sum_\ell \eta_\ell(j) = 0$ for all $j$. 
However, the strict positivity of $A'=(a'_{jk})$ implies for the nonvanishing complex vector $(c_j)$ 
that $0 < \sum_{j,k}c_j \bar{c}_k a_{jk}'$,
so from 
\[0 < \sum_{j,k}c_j \bar{c}_k a_{jk}'  = \bigg \vert \sum_j c_j \sum_\ell \eta_\ell(j) \bigg \vert^2
\]
it follows that there is some $j_0 \in \{1..m\}$ for which $c_{j_0} \sum_\ell \eta_\ell(j_0) \neq 0$.
\end{proof}

\begin{lemma}\label{schoenberg}
    A kernel $\psi: X\times X\to \C$ is negative definite if and only if $e^{-t\psi}$ is positive definite for all $t>0$.
\end{lemma}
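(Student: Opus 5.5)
This is Schoenberg's theorem, and I will prove it in the standard Berg--Christensen--Ressel form. Here a kernel $\psi$ is \emph{negative definite} if it is Hermitian, $\psi(y,x)=\overline{\psi(x,y)}$, and satisfies $\sum_{j,k} c_j\bar c_k \psi(x_j,x_k)\le 0$ whenever $\sum_j c_j=0$. Throughout, ``positive definite'' in the statement is read as positive semidefinite, i.e.\ membership in $\mathscr{P}_{(\geqslant)}$.

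\emph{($\Leftarrow$).} Suppose $e^{-t\psi}$ is positive semidefinite for all $t>0$. Then $\phi_t := (1-e^{-t\psi})/t$ satisfies, for any $(c_j)$ with $\sum c_j=0$,
\[
\sum_{j,k} c_j\bar c_k \phi_t(x_j,x_k) = -\frac1t\sum_{j,k} c_j\bar c_k e^{-t\psi(x_j,x_k)} \le 0,
\]
because the constant term drops out when $\sum c_j=0$. Hermitian symmetry of $\psi$ follows from that of $e^{-t\psi}$ by letting $t\to 0$. Letting $t\to0^+$ pointwise gives $\phi_t\to\psi$, so $\psi$ is negative definite, since the defining inequalities are closed under pointwise limits.

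\emph{($\Rightarrow$).} This is the main step.

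First fix a base point $x_0\in X$ and apply the standard lemma: for negative definite $\psi$, the kernel
\[
\varphi(x,y) := \psi(x,x_0)+\psi(x_0,y)-\psi(x,y)-\psi(x_0,x_0)
\]
is positive semidefinite. To prove this, take any $c_1,\dots,c_n$ at points $x_1,\dots,x_n$. Adjoin the point $x_0$ with coefficient $c_0=-\sum_j c_j$, so the extended coefficient vector sums to zero. Expanding $\sum_{j,k=0}^n c_j\bar c_k\psi(x_j,x_k)\le 0$ then gives exactly $\sum_{j,k\ge1} c_j\bar c_k\varphi(x_j,x_k)\ge0$. This bookkeeping is the step I expect to need the most care.

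Next, for $t>0$ write
\[
e^{-t\psi(x,y)} = e^{-t\psi(x_0,x_0)}\, e^{t\varphi(x,y)}\, \overline{e^{-t\psi(y,x_0)}}\, e^{-t\psi(x,x_0)},
\]
using Hermitian symmetry, $\psi(x_0,y)=\overline{\psi(y,x_0)}$. Each factor is positive semidefinite:
\begin{itemize}
\item $e^{t\varphi}=\sum_n t^n\varphi^n/n!$ is positive semidefinite, because powers of $\varphi$ are positive semidefinite by the Schur product theorem (the semidefinite case cited after Lemma~\ref{Schur}), sums of positive semidefinite kernels are positive semidefinite, and the cone is closed under pointwise limits.
\item The kernel $f(x)\overline{f(y)}$ with $f(x)=e^{-t\psi(x,x_0)}$ is a rank-one positive semidefinite kernel.
\item The constant $e^{-t\psi(x_0,x_0)}$ is positive, since $\psi(x_0,x_0)\in\R$.
\end{itemize}
Multiplying these positive semidefinite kernels with Schur's product theorem once more shows that $e^{-t\psi}$ is positive semidefinite, which completes the proof.

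One remark on the strict version. If one wants strict positive definiteness as in $\mathscr{P}$, one would instead use Lemma~\ref{Schur} for the products. However, the rank-one factor is never strictly positive definite, so strictness does not follow from this argument without further hypotheses. I will therefore state the lemma for the semidefinite cone $\mathscr{P}_{(\geqslant)}$.
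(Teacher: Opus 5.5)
Your argument is correct and is the standard Berg--Christensen--Ressel proof of Schoenberg's theorem, which the paper only cites; your semidefinite reading is also the right one, since $\psi\equiv 0$ is negative definite while $e^{-t\psi}\equiv 1$ is not strictly positive definite, so ($\Rightarrow$) fails in the strict sense. There is one harmless sign slip: the definition of $\varphi$ gives $-t\psi(x,y)=t\psi(x_0,x_0)+t\varphi(x,y)-t\psi(x,x_0)-t\psi(x_0,y)$, so the constant factor is $e^{+t\psi(x_0,x_0)}$ rather than $e^{-t\psi(x_0,x_0)}$, and it is still a positive constant.
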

\begin{proof}
The authors of \cite{semigroups} attribute the proof to \cite{schoen}.
\end{proof}

\subsection{Integral representations} 
Let $(S,+, \ast)$ be an abelian semigroup. 
The dual semigroup $S^\ast$ of $S$ is the set of semicharaters 
$\rho: S \to \C$ with  
$\rho(s+t)=\rho(s)\rho(t)$ for all $s,t \in S$.
Note that every semicharacter is positive definite.
An absolute value on $S$ is a function $\alpha: S\to \R^\times/\Z^\times$ such that $\alpha(e) = 1$, $\alpha(s\circ t) \leq \alpha(s) \alpha(t)$, and $\alpha(s^\ast)=\alpha(s)$ 
for all $s,t \in S$.
A function $f: S\to \C$ is exponentially bounded if there is an absolute value $\alpha$ on $S$ and a constant $C>0$ 
such that 
$f(s) \leq C \alpha(s)$ for all $s\in S$.

The class of positive definite function $f$ on the real line 
are those with an integral representation 
\[
f(x) = \int_{-\infty}^\infty e^{iyx} d \mu(y)
\]
given as a Fourier transform with respect to a compactly supported positive measure $\mu \geq 0$.    
More generally, the lemma below connects positive definite functions with the theory of integral representations. 

\begin{lemma}
If $\mu$ is a compactly supported Radon measure on $S^\ast$, 
then \[
\varphi(s) := \int_{S^\ast} \rho(s) d \mu(\rho) 
\]
is positive definite and exponentially bounded.   Conversely,   if 
$\varphi$ is a positive definite exponentially bounded function on $S$, then $\varphi$ admits the integral representation
    \[
    \varphi(s) = \int_{S^\ast} \rho(s) d\mu(\rho)
    \]
   for a unique compactly supported Radon measure $\mu$ on $S^\ast$. 
\end{lemma}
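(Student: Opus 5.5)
The forward direction is a direct computation, and I plan to do it first. Fix $s_1,\dots,s_n\in S$ and $c_1,\dots,c_n\in\C$. By linearity of the integral and the semicharacter property,
\[
\sum_{j,k} c_j\bar c_k\,\varphi(s_j^\ast + s_k)=\int_{S^\ast}\sum_{j,k}c_j\bar c_k\,\rho(s_j^\ast)\rho(s_k)\,d\mu(\rho).
\]
For this I interpret $S^\ast$ as consisting of $\ast$-semicharacters, i.e.\ $\rho(s^\ast)=\overline{\rho(s)}$, which is the standing convention of \cite{semigroups}. With that convention the integrand is $\bigl|\sum_k \bar c_k\rho(s_k)\bigr|^2\ge 0$, so the integral is nonnegative.

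For exponential boundedness I use compactness of $\operatorname{supp}\mu$. Since $S^\ast$ carries the topology of pointwise convergence, I set
\[
\alpha(s):=\sup_{\rho\in\operatorname{supp}\mu}|\rho(s)|\vee 1 .
\]
The supremum is finite: the functions $\rho\mapsto|\rho(s)|$ are continuous, so they are bounded on a compact set. Next I check that $\alpha$ is an absolute value. Submultiplicativity follows from $\rho(s+t)=\rho(s)\rho(t)$, and $\alpha(s^\ast)=\alpha(s)$ follows from the $\ast$-property. The neutral element is sent to $1$ because $\rho(0)\in\{0,1\}$. This gives $|\varphi(s)|\le \mu(S^\ast)\,\alpha(s)$.

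For the converse I follow the Berg--Christensen--Ressel route. Let $\varphi$ be positive definite with $|\varphi|\le C\alpha$. The steps are as follows.
\begin{enumerate}
\item Form the reproducing kernel Hilbert space $\mathscr{H}_\varphi$ of the kernel $K_\varphi(s,t)=\varphi(s^\ast+t)$. Its span of functions $\varphi_t:=K_\varphi(\cdot,t)$ is dense.
\item Define the shift operators $E_u\varphi_t:=\varphi_{t+u}$. The semigroup is abelian and the kernel is shift-compatible, so $E_u^\ast=E_{u^\ast}$, and the family $(E_u)$ is commutative.
\item Show that each $E_u$ is bounded with $\|E_u\|\le\alpha(u)$. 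This is the main obstacle. I would first obtain $\|E_u f\|^2=\langle E_{u^\ast+u}f,f\rangle$. Iterating Cauchy--Schwarz gives
\[
\|E_uf\|^{2^{n}}\le \|f\|^{2^n-2}\langle E_{(u^\ast+u)^{2^{n-1}}}f,f\rangle .
\]
The right-hand side is controlled by $C\alpha(u)^{2^n}$ times quantities depending only on $f$, using exponential boundedness and submultiplicativity. Taking $2^n$-th roots and letting $n\to\infty$ kills the constants.
\item With boundedness in hand, $\{E_u\}$ generates a commutative unital $C^\ast$-algebra $\mathcal A$. Its Gelfand spectrum embeds into $S^\ast$ via $\chi\mapsto(u\mapsto\chi(E_u))$. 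The image consists of $\ast$-semicharacters satisfying $|\rho|\le\alpha$, and this set is compact by Tychonoff.
\item Let $\mu$ be the spectral measure of the vector $\varphi_0$, where $0$ is the neutral element. Then
\[
\varphi(s)=\langle E_s\varphi_0,\varphi_0\rangle=\int\rho(s)\,d\mu(\rho),
\]
and $\mu$ has compact support.
\end{enumerate}

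Uniqueness comes from Stone--Weierstrass. The linear span of the functions $\rho\mapsto\rho(s)$ is a unital $\ast$-subalgebra of $C(\{|\rho|\le\alpha\})$ that separates points, hence it is dense. Therefore any two compactly supported measures with the same moments $\int\rho(s)\,d\mu$ agree, after restricting to a common compact set of the form $\{|\rho|\le\alpha'\}$ containing both supports.
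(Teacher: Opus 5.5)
Your proposal is correct. The paper gives no argument of its own here: it treats the forward direction as evident and cites \cite{semigroups} for the converse, so what you wrote is an actual proof where the paper only has a pointer. Your forward half is exactly the omitted easy direction, and $\alpha(s)=\sup_{\rho\in\operatorname{supp}\mu}|\rho(s)|\vee 1$ is indeed an absolute value.

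For the converse your route is operator-theoretic. You realize $\varphi(s)=\langle E_s\varphi_0,\varphi_0\rangle$ as a matrix coefficient of the commuting shift $\ast$-representation on the reproducing kernel space. You then get $\|E_u\|\le\alpha(u)$ from iterated Cauchy--Schwarz; in additive notation the element in your display is $2^{n-1}(u^\ast+u)$. Finally you read off $\mu$ as the spectral measure of $\varphi_0$ for the commutative $C^\ast$-algebra generated by the $E_u$.

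As far as I recall, the proof in the cited book goes through convexity rather than spectral theory. The same Cauchy--Schwarz iteration gives $|\varphi(s)|\le\varphi(0)\alpha(s)$, so the normalized $\alpha$-bounded positive definite functions form a compact convex set. One shows its extreme points are exactly the semicharacters with $|\rho|\le\alpha$ and represents every element by Krein--Milman/Choquet. Uniqueness there uses the same Stone--Weierstrass argument you give.

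What each route buys: yours is short once the spectral theorem is available. It also produces $\mu$ as the spectral measure of the cyclic vector $\varphi_0$ among the ``coherent states'' $\varphi_t$, which fits the representation-theoretic viewpoint of the rest of the paper. The convexity route avoids operator theory, but you must identify the extreme points by hand. In exchange it gives the extra structural fact that this compact convex set is a simplex with closed extreme boundary.

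Make explicit two conventions you rely on, since the paper's definitions omit them. First, ``positive definite'' must mean positive semidefinite, with $\mu\ge 0$. Under the strict reading used elsewhere in the paper the forward direction fails: $\mu=\delta_\rho$ gives the rank-one kernel $\overline{\rho(s)}\rho(t)$. Second, semicharacters must satisfy $\rho(0)=1$ as well as $\rho(s^\ast)=\overline{\rho(s)}$. Otherwise $\rho\equiv 0$ lies in $S^\ast$ and can carry arbitrary mass without changing $\varphi$, which destroys uniqueness. Your Stone--Weierstrass step (``unital'') silently uses $\rho(0)=1$, whereas your remark $\rho(0)\in\{0,1\}$ in the forward direction allows $\rho\equiv 0$. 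Under the convention of \cite{semigroups} both steps are fine, and the measure you construct lives on characters with $\chi(E_0)=1$ anyway.
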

\begin{proof}
The lemma is a generalization of Bochner's theorem;
the proof of the nontrivial direction can be found in \cite{semigroups}.
\end{proof}

\section{Strict positivity of the heat kernel}\label{heatsection}

To study Whitney extensions on $\R$, we would like to find families of functions that are close to bump functions, but without the property of vanishing everywhere except in the vicinity of a point. 
Such functions arise in the theory of the heat equation. 

The classical heat equation is 
\begin{equation}\label{heatR}
\partial_t u = \Delta_x u, \quad x\in \R,  
\end{equation}
subject to a given boundary condition $f(x) = u(0,x)$. Let
\[
p_t(x) = \frac{1}{\sqrt{4\pi t}} e^{-x^2/(4t)}, \quad t>0
\]
be the Gaussian density of mean zero and standard deviation $\sqrt{2t}$. Then, the convolution 
\begin{equation} \label{integral}
u(t,x) := p_t * f (x) = \int_{-\infty}^\infty f(y) \frac{1}{\sqrt{4\pi t}} e^{-(x-y)^2/4t} dy
\end{equation}
solves \eqref{heatR}.
Letting $\delta$ be the Dirac delta function, one has the distributional convergence   
\[
p_t \to \delta \quad \textrm{as $t\searrow 0$},
\]
hence the boundary condition
$f(x) = \lim_{t \searrow 0}(p_t \ast f)=  f(x)$.
The family $(p_t)_{t>0}$ is a one-parameter semigroup due to the identity  
\[
p_t * p_s = p_{t+s}
\]
for all $t, s>0$, which follows by applying Fourier transforms on both sides, and using 
$\widehat{p}_t(\xi) = e^{-t \xi^2}$ for every $t>0$.
    
Let $f:\Z \to \mathbb{C}$ be a function,
and let $\cup_{\alpha \in A} U_\alpha$ 
be an open cover of $\R$ consisting of bounded open sets.
Fix some $\alpha \in A$,
define $\Gamma_\alpha = \Gamma \cap U_\alpha$ for $\Gamma = \Z$, 
and consider the problem of constructing an analytic function $W = W_\alpha \in \mathscr{C}^\infty(\R)$
with $W(x) = f(x)$ for all $x\in \Gamma_\alpha$
that, additionally, solves the heat equation.
Define
\[
p_t(v,w) := p_t(v-w) = \frac{1}{\sqrt{4\pi t}} e^{-\frac{1}{4t}
(v-w)^2}, \quad t>0.
\]
For $x \in \Gamma_\alpha$,
let $m= \operatorname{card} \Gamma_\alpha$,
and consider the equation
\[
f(x) 
= 
\sum_{n=1}^m a_n\frac{1}{\sqrt{4\pi t}} e^{-\frac{1}{4t}
x^2} e^{-\frac{1}{4t} n^2} e^{\frac{1}{2t}
xn}
\]
for $(a_n) \in \C^m$, which we rearrange as 
\[
f(x) e^{\frac{1}{4t}
x^2} = 
\sum_{n=1}^{m} a_n \frac{e^{-\frac{1}{4t} n^2}}{\sqrt{4\pi t}} 
\left( e^{-\frac{1}{2t}
x}  \right)^n.
\]
As $x$ traverses $\Gamma_\alpha$, 
we obtain a linear system of dimension $m\times m$,
which is invertible with a solution 
$(a_n')_{n=1}^m$
where
\[
a'_n = a_n \frac{e^{-\frac{1}{4t} n^2}}{\sqrt{4\pi t}}, \quad n\leq m.
\]
The coefficients $a_n$ are then easily recovered from $a_n'$.  
The result is the function $W(t, \cdot) \in \mathscr{C}^\infty(\R)$ given by 
\begin{equation}\label{localW}
W(t,x) = \sum_{1 \leq n \leq m}  a_n p_t(x-n)
\end{equation}
such that
$W(t,x) = f(x)$ 
for all $x\in \Gamma_\alpha$ 
and, moreover,  
$\partial_t W = \Delta_x W$.\\

\noindent 
In summary, 
given a discrete subgroup of $\R$, 
a solution to the Whitney extension problem for the restriction $f|_{\Gamma_\alpha}$ to a bounded chart in an atlas is given above. 
Our approach was to look for an extension $W$ approximately of the form 
\begin{equation} \label{approx}
W (t,x)  \approx  \int_{-\infty}^\infty f(y) p_t(x,y) dy
\end{equation}
where 
$
p_t(x,y) := p_t(x-y)
$
is the covariance matrix for the heat semigroup. 
In fact, we defined $W$ as an appropriate Riemann sum for the integral \eqref{approx}
with respect to the partition at the points in the discrete set $\Gamma$.
When calculating the extension $W$ of $f$, 
the essential feature was 
that the matrix
$(p_t(x_j,x_k))_{j,k}$ was invertible for all finite subsets  $x_1,\ldots,x_m$ of $\Gamma$,
which leads to the notion of positive definite functions defined in \eqref{positivesemi}.\\

\noindent 
For the heat equation on the circle $S^1$ given by
\begin{equation}\label{heatS1}
\partial_t = \Delta u, \quad x\in S^1, t>0,
\end{equation}
 the Laplacian $\Delta$ has eigenfunctions  
\[
\Delta_x \varphi_n (x) = -n^2 \varphi_n (x)
\]
with eigenvalue $-n^2$ 
given by $\varphi_n(x) = e^{inx}$
for $x\in \R/2\pi \Z$.
To obtain a solution of \eqref{heatS1} for $x\in S^1$ and $t > 0$, 
we separate the variables by  
forming the product 
$e^{inx} e^{-tn^2}$
of the Laplacian eigenfunction  
and a Gaussian function of the $t$-variable 
scaled by the eigenvalue $-n^2$. By averaging over $\Z$,
the theta series
\[
\theta(t,x) := \frac{1}{2\pi} \sum_{n= -\infty}^{\infty} e^{-t  n^2} e^{ inx}, \quad t>0, 
\]
descends to a function on $\R/\Z$ 
that solves \eqref{heatS1} and
$
\theta(t,x) \to \delta 
$ as distributions for $t \searrow 0$.
Similarly, averaging 
$p_t(x) = \frac{1}{(4\pi t)^{1/2}} e^{-\frac{x^2}{4t}}$ over $\Z$ gives
a function 
\[
\sum_{k = -\infty}^{\infty} \frac{1}{(4\pi t)^{1/2}} e^{-\frac{(x-k)^2}{4t}}
\]
on $\R/\Z$ converging to $\delta$ as $t \searrow 0$. The heat kernel on $S^1$ is known to be unique, 
so we obtain the identity 
\begin{equation} \label{theta}
 \sum_{n= -\infty}^{\infty} e^{-t n^2} e^{2\pi inx}
= 
\sum_{n = -\infty}^{\infty} \frac{1}{(4\pi t)^{1/2}} e^{-\frac{(x-n)^2}{4t}}
\end{equation}
for all $t>0$ and $x\in \R/\Z$.
This is an instance of the Poisson summation formula
\[
\sum_{n \in \Z} f(n) = \sum_{ n\in \Z} \widehat{f}(n)
\]
stating that the average of a sufficiently decaying function $f$  at the points of the discrete group $\Z$ is equal to the average of the Fourier transform of the function over the dual group $\Z^* = \operatorname{Hom}(\Z,\mathbb{C})\simeq \Z$.
We note that M. Vergne 
has proved a generalization of the Poisson summation formula for the dual restricted root space $\mathfrak{a}^\ast$ in the Lie algebra of arbitrary groups \cite{vergneannals, vergnearxiv}.
The identity \eqref{theta} gives a functional equation relating $t$ and $1/t$, reflecting that $\theta$ is an automorphic form 
of weight $1/2$ for the group $G=\SL_2(\R)$ (or, more precisely, its metaplectic double cover). Alternatively,
the Poisson summation formula
\[
    \theta(t, x) =  \sum_{n\in \Z} p_t(x-n)
\] 
expresses the theta series as a nonconstant Whitney extension
of the constant function
\[
f(0) = \sum_{k=-\infty}^\infty e^{-t k^2}.
\]

\subsection{Gluing} \label{sec:patch}
Let $M$ be a manifold and
let $\Gamma \subset M$ be discrete.
Let $y: \Gamma \to \mathbb{C}$ be a function.
Suppose the manifold $M$ has a locally finite atlas.
Then we may choose an open cover
$M = \cup_{\alpha \in A} U_\alpha$ 
such any $x\in M$ meets only finitely many $U_\alpha$. 
Define 
\[
g(x) = \operatorname{card}\{ \alpha \in A: 
x\in U_\alpha \} \in \Z_{\geq 1}.
\]
\begin{lemma}
The function $g: M \to \Z$ is  locally constant, i.e. $g$ is continuous when $\Z$ carries the discrete topology. 
In fact, for each $x\in M$, 
the set 
\[
V_x = \bigcap_{\alpha: x\in U_\alpha}  U_\alpha
\]
is an open neighborhood of $x$ such that 
$g(v) = g(x)$ for all $v \in V_x$.
\end{lemma}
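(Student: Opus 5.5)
The plan is to prove the two claims in the order the lemma states them: first that $V_x$ is an open neighborhood of $x$, then that $g$ is constant equal to $g(x)$ on $V_x$. Local constancy of $g$, i.e. continuity into $\Z$ with the discrete topology, then follows at once, since every point has an open neighborhood on which $g$ is constant.

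\emph{Openness.} By the choice of the cover, $x$ lies in only finitely many $U_\alpha$. Let $A_x=\{\alpha\in A: x\in U_\alpha\}$, a finite nonempty set since the $U_\alpha$ cover $M$. Then $V_x=\bigcap_{\alpha\in A_x}U_\alpha$ is a finite intersection of open sets containing $x$, hence an open neighborhood of $x$.

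\emph{The inequality $g\ge g(x)$ on $V_x$.} For $v\in V_x$ we have $v\in U_\alpha$ for every $\alpha\in A_x$, so $A_x\subseteq A_v$ and therefore $g(v)\geq g(x)$.

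\emph{The reverse inequality.} We must show $A_v\subseteq A_x$, i.e. that no $U_\beta$ with $x\notin U_\beta$ meets $V_x$. This is the step I expect to be the real obstacle, and the definition of $V_x$ alone does not force it. If some $U_\beta$ with $x\notin U_\beta$ meets $V_x$, then $g$ jumps at points of $U_\beta\cap V_x$. For example, with $M=\R$, $U_1=(-1,1)$, $U_2=(0,2)$ and $x=0$, one gets $V_0=(-1,1)$, yet $g(1/2)=2\neq 1=g(0)$.

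What I would try first is to use local finiteness of the atlas. Choose an open $N\ni x$ meeting only finitely many $U_\beta$, say $U_{\beta_1},\dots,U_{\beta_r}$ with $x\notin U_{\beta_i}$. The natural candidate is then $V_x\cap N\setminus\bigcup_i\overline{U_{\beta_i}}$. This set is open only when $x\notin\overline{U_{\beta_i}}$ for each $i$, so the needed extra input is a cover in which every point outside $U_\beta$ is also outside $\overline{U_\beta}$, i.e. a cover by sets that are also closed. Such a cover is available only when the relevant pieces of $M$ are disconnected. Granting that hypothesis, $A_v=A_x$ on this smaller neighborhood, and local constancy of $g$ follows by the first paragraph.

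Without it, only the inequality $g(v)\geq g(x)$ on $V_x$ can be established. This lower semicontinuity is what the later gluing argument actually needs, because it says that near $x$ the finitely many local extensions $W_\alpha$ over $\alpha\in A_x$ are all defined.
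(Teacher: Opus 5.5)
\textbf{Verdict.} Your diagnosis is correct, and it exposes an error in the paper rather than a gap in your argument: as stated, the lemma is false.

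\textbf{Where the paper's proof fails.} The paper establishes exactly what you establish: $V_x$ is an open neighborhood of $x$, and $A_x\subseteq A_v$ for every $v\in V_x$ (with $A_y=\{\alpha: y\in U_\alpha\}$). It then writes
\[
A_x\subseteq\bigcap_{v_0\in V_x}A_{v_0}\subseteq\{\alpha: V_x\subseteq U_\alpha\}\subseteq A_x .
\]
From the resulting equality $A_x=\bigcap_{v\in V_x}A_v$ it concludes $A_v=A_x$ for each $v\in V_x$. That step confuses the intersection of the sets $A_v$ with each individual $A_v$. Only $A_x\subseteq A_v$, i.e.\ $g(v)\ge g(x)$, follows, and your example shows that the reverse inclusion can fail.

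Your characterization is also right. For a locally finite cover, $g$ is constant near $x$ iff $x\notin\overline{U_\beta}$ for every $\beta\notin A_x$. Hence $g$ is locally constant on $M$ iff every $U_\beta$ is clopen. On a connected manifold such as $\R$ or $G/K$, this forces each nonempty $U_\beta$ to be all of $M$. So the lemma never holds for the covers by bounded sets used later in the paper.

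\textbf{Two small corrections.} First, as written $U_1\cup U_2=(-1,2)$ does not cover $\R$. Take $M=(-1,2)$, or $U_1=(-\infty,1)$ and $U_2=(0,\infty)$, and the example works unchanged. Second, $V_x\cap N\setminus\bigcup_i\overline{U_{\beta_i}}$ is always open; what goes wrong when $x\in\overline{U_{\beta_i}}$ is that the set no longer contains $x$.

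\textbf{Your closing claim is wrong.} Lower semicontinuity of $g$ is not what the subsequent gluing proposition needs. The $W_\alpha$ lie in $\mathscr{C}^\infty(M)$, so they are defined everywhere anyway. What the gluing proof uses is that $W(v)=g(v)^{-1}\sum_{\alpha\in A_v}W_\alpha(v)$ is given by one fixed finite sum near $x$, and that requires $A_v=A_x$.

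For a concrete failure, take $M=(-1,2)$, $U_1=(-1,1)$, $U_2=(0,2)$, $\Gamma=\{-1/2,3/2\}$, $f(-1/2)=0$, $f(3/2)=1$, $W_1\equiv 0$ and $W_2\equiv 1$. The averaged $W$ equals $0$ on $(-1,0]$, $1/2$ on $(0,1)$ and $1$ on $[1,2)$, so it is not even continuous. The gluing proposition, and the main theorem whose proof rests on it, therefore inherit the same gap. A smooth partition of unity $\sum_\alpha\chi_\alpha W_\alpha$ subordinate to the cover would restore smoothness and interpolation, but not analyticity or the heat equation.
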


\begin{proof}
Since $M$ is locally finite, the set 
$\{\alpha: x\in U_\alpha \}$ is finite. 
Therefore, the set
 \[
V_x := \bigcap_{\alpha: x\in U_\alpha} U_\alpha 
\] 
is a finite intersection of open sets and hence itself open. 
Likewise, each set in the intersection defining $V_x$ contains $x$, so
$V_x$ is a neighborhood of $x$. 
We will show that $g(v) = g(x)$ for all $v\in V_x$.

Let $v_0 \in V_x$ and 
let $\alpha$ be such that $x \in U_\alpha$.
Then $ v_0 \in V_x =\cap_{\beta: x\in U_\beta} U_\beta \subseteq U_\alpha$.
Hence $\alpha$ has the property that $v_0 \in U_\alpha$.
We conclude
\[
\{ \alpha: x\in U_\alpha \} \subseteq 
\{\alpha:  v_0 \in U_\alpha \}.
\]
As this is true for all $v_0 \in V_x$, 
\[
\{ \alpha: x\in U_\alpha \} \subseteq 
\bigcap_{v_0 \in V_x} \{\alpha:  v_0 \in U_\alpha \}
\subseteq \{ \alpha : V_x \subseteq U_\alpha \}
\subseteq \{ \alpha: x \in U_\alpha\}.
\]

Hence there must be equality of all the sets involved in this sequence of containments, and we conclude for all $v\in V_x$ that 
\[
\{ \alpha: v \in U_\alpha\}
=
\{ \alpha: x \in U_\alpha \}.
\]
In particular, for all $v\in V_x$ it holds that $g(v) = g(x)$.
\end{proof}

\begin{proposition}\label{glue}
Let $\Gamma$ be a discrete subspace of a manifold $M$ with a locally finite atlas $(U_\alpha)_{\alpha \in A}$.
Let
$f: \Gamma \to \mathbb{C}$ be a function,
and assume for all $\alpha \in A$ 
that 
there exists
$W_\alpha \in \mathscr{C}^\infty(M)$ 
such that 
$W_\alpha(x) = f(x)$ for every 
$x\in \Gamma_\alpha = \Gamma \cap U_\alpha$.
Then
\[
W(x) :=\frac{1}{\operatorname{card}\{\alpha \in A: x \in U_\alpha \}} \sum_{\alpha \in A: x\in U_\alpha} W_\alpha (x) 
\]
belongs to
$\mathscr{C}^\infty(M)$
and satisfies $W(x) = f(x)$ for all $x\in \Gamma$.
\end{proposition}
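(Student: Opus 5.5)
The plan is to prove the two claims separately, since the interpolation property is purely pointwise while smoothness is local. For the interpolation property, fix $x\in\Gamma$ and let $A_x=\{\alpha\in A: x\in U_\alpha\}$. This set is finite and nonempty: it is nonempty because the $U_\alpha$ cover $M$, and finite by local finiteness. For every $\alpha\in A_x$ we have $x\in\Gamma\cap U_\alpha=\Gamma_\alpha$, so the hypothesis gives $W_\alpha(x)=f(x)$. Hence $W(x)$ is the average of $g(x)=\operatorname{card}A_x$ copies of $f(x)$, which equals $f(x)$.

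For smoothness I would use the preceding lemma on the function $g$. Fix $x\in M$ and the open neighborhood $V_x=\bigcap_{\alpha\in A_x}U_\alpha$. The proof of that lemma shows more than $g(v)=g(x)$: it shows that $\{\alpha: v\in U_\alpha\}=A_x$ for every $v\in V_x$. Consequently, on $V_x$,
\[
W(v)=\frac{1}{g(x)}\sum_{\alpha\in A_x}W_\alpha(v),
\]
which is a fixed finite linear combination, with constant coefficients, of the functions $W_\alpha\in\mathscr{C}^\infty(M)$. So $W|_{V_x}$ is smooth. Since smoothness is a local property and the sets $V_x$ cover $M$, it follows that $W\in\mathscr{C}^\infty(M)$.

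The main obstacle is the step where the index set is claimed to be constant on $V_x$. Literally, the lemma only gives that $V_x$ is open and that $A_x\subseteq\{\alpha:v\in U_\alpha\}$ for $v\in V_x$. The reverse inclusion is not automatic: a point $v\in V_x$ may lie in an extra set $U_\beta$ with $x\notin U_\beta$, and then $g(v)>g(x)$. If that happens, $W$ jumps near the boundary of $U_\beta$, because $W_\beta$ is added abruptly there. So this step must be checked carefully rather than quoted. The honest route is to first read off the equality of index sets from the lemma, and note that this is what makes the formula for $W$ rigid on $V_x$. If that equality turns out to be unavailable in general, I would repair the argument with a smooth partition of unity $(\chi_\alpha)$ subordinate to $(U_\alpha)$, setting
\[
W=\sum_\alpha \chi_\alpha W_\alpha .
\]
This $W$ is smooth by local finiteness. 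It interpolates $f$ because at $\gamma\in\Gamma$ only indices with $\gamma\in U_\alpha$ contribute, each contributes $W_\alpha(\gamma)=f(\gamma)$, and the weights sum to $1$. In either version, the interpolation argument above is unchanged.
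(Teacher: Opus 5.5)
Your interpolation argument is correct, and your first route to smoothness is exactly the paper's proof. The problem is that you leave the decisive step conditional, and that step in fact fails.

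The proof of the preceding lemma only establishes $A_x=\bigcap_{v\in V_x}\{\alpha: v\in U_\alpha\}$, i.e.\ $A_x\subseteq\{\alpha: v\in U_\alpha\}$ for each $v\in V_x$. Its final passage to equality for every individual $v$ is a non sequitur. The equality cannot hold in any nontrivial situation. If $v\mapsto\{\alpha: v\in U_\alpha\}$ were locally constant, it would be constant on a connected $M$, and then every chart domain would be $\varnothing$ or $M$.

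Worse, the proposition itself is false for the averaged $W$. On $M=\R$ take the atlas $U_1=(-\infty,1)$, $U_2=(0,\infty)$ with identity charts, and let $\Gamma=\{-1,2\}$, $f(-1)=0$, $f(2)=1$, $W_1\equiv 0$, $W_2\equiv 1$. All hypotheses hold. Yet $W=0$ on $(-\infty,0]$, $W=\tfrac12$ on $(0,1)$ and $W=1$ on $[1,\infty)$, so $W$ is not even continuous. In the same example $g(0)=1$ while $g=2$ on $(0,1)$, so the lemma fails too. There is therefore nothing to ``read off'' from the lemma: the statement as written admits no proof, and the paper's argument breaks at exactly the point you identified.

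Your partition-of-unity fallback $W=\sum_\alpha\chi_\alpha W_\alpha$ is a correct argument. It is smooth by local finiteness of the supports. At $\gamma\in\Gamma$, $\chi_\alpha(\gamma)\neq 0$ forces $\gamma\in U_\alpha$, and the weights sum to $1$, so $W$ interpolates $f$. But it proves a different, weaker statement: that \emph{some} smooth interpolant exists, not that the averaged $W$ is smooth. This is the right corrected form of the proposition, and you should present it as such rather than as a contingency.

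Be aware that it does not rescue the paper's use of the proposition in the main theorem, where the $W_\alpha$ are analytic solutions of the heat equation. The cutoffs $\chi_\alpha$ destroy analyticity. Also, $\Delta(\chi_\alpha W_\alpha)$ acquires the extra term $W_\alpha\Delta\chi_\alpha$ and a cross term in the first derivatives of $\chi_\alpha$ and $W_\alpha$. These cancel after summing over $\alpha$ only when the $W_\alpha$ coincide, so in general the glued function no longer solves $\partial_t W=\Delta W$.
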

\begin{remark}
In prop. \ref{glue},
noncompact support of $W_\alpha$ 
    for all $\alpha \in A$ is allowed. 
\end{remark}
\begin{proof}[Proof of \ref{glue}]
Let 
$x\in \Gamma$, and note for any $U_\alpha$ containing $x$ 
that since $x\in \Gamma \cap U_\alpha$ and since $W_\alpha \in \mathscr{C}^\infty(M)$ is a local extension of $f|_{\Gamma_\alpha}$, 
one has 
\[
W_\alpha(x) = f(x). 
\]
Consequently, 
\[
W(x) = \frac{1}{\operatorname{card}\{\alpha \in A: x\in U_\alpha\}} \sum_{\alpha \in A: x \in U_\alpha} f(x) =  
 f(x).
\]
The final step is prove that $W \in \mathscr{C}^\infty(M)$:
Let $x\in M$ and consider the open neighborhood 
\[
V_x = \bigcap_{\alpha: x\in U_\alpha} U_\alpha
\]
around $x$, where we know that 
\[
\{ \alpha \in A : v \in U_\alpha \}
= \{ \alpha \in A : x \in U_\alpha \}
\]
for all $v \in V_x$.
Note for all $v\in V_x$ that
\begin{align}    
W(v) &=\frac{1}{\operatorname{card}\{\alpha \in A: x\in U_\alpha\}}
\sum_{ \alpha \in A : x \in U_\alpha  } W_\alpha(v)
\label{eq:global}
\end{align}
Whenever $x\in U_\alpha$ we know for $V_x \subseteq U_\alpha$
that $W_\alpha$ is smooth when restricted to $V_x$.
The sum \eqref{eq:global} expresses $W$ as a finite sum 
of smooth functions on the open neighborhood $V_x$ of $x$.
This proves that $W$ is smooth in an open neighborhood around $x$.
Since $x \in M$ was arbitrary, we conclude that $W \in \mathscr{C}^\infty(M)$.
\end{proof}

\begin{corollary}
Let $(f_t)_{t>0}$
be a family of functions 
 $f_t: \Z \to \mathbb{C}$. 
There exists a family $(W_t)_{t > 0}$ 
where $W_t \in \mathscr{C}^\infty(\R)$
such that $W_t(\gamma) = f(\gamma)$ for all $\gamma \in \Z$
and $\partial_t W_t(x) = \Delta_x W_t(x)$ for
all $x\in \R$ and $t>0$.
If we fix $t>0$,
then for every $x\in \R$,
there is an open neighborhood $U$ of $x$
such that for all $z \in U$, 
\[
W_t(z) =\sum_{1 \leq k \leq m} a_k p_t(z-k)
\]
for $m \geq 1$ 
and constants $a_k$ 
depending only on $U$.
\end{corollary}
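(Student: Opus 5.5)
The plan is to combine the local construction \eqref{localW} with the gluing of Proposition \ref{glue}, applied for each fixed $t>0$.

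\emph{Choice of cover.} I would cover $\R$ by the bounded open intervals $U_k=(k-1,k+1)$, $k\in\Z$. This cover is locally finite: each $x$ lies in at most two of the $U_k$. Also each $\Gamma_k=\Z\cap U_k=\{k\}$ is finite. More generally, any locally finite cover by bounded intervals works, since then every $\Gamma_\alpha$ is finite.

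\emph{Local extensions.} On each $U_\alpha$, with $m=\operatorname{card}\Gamma_\alpha$, I would write $\Gamma_\alpha=\{\gamma_1,\dots,\gamma_m\}$ and look for $W_\alpha(t,x)=\sum_{k=1}^m a_k^{(\alpha)}(t)\,p_t(x-\gamma_k)$. The interpolation conditions $W_\alpha(t,\gamma_j)=f_t(\gamma_j)$ form an $m\times m$ linear system with matrix $(p_t(\gamma_j-\gamma_k))_{j,k}$. This matrix is invertible by either of two arguments:
\begin{itemize}
\item the Vandermonde argument preceding \eqref{localW}: after factoring out $e^{-\gamma_j^2/4t}e^{-\gamma_k^2/4t}$, the remaining matrix $(e^{\gamma_j\gamma_k/2t})$ is a generalized Vandermonde matrix in the distinct nodes $\gamma_j$;
\item strict positive definiteness of the Gaussian: $p_t(x)=\frac{1}{2\pi}\int e^{-t\xi^2}e^{ix\xi}\,d\xi$ with strictly positive density, so $\sum c_j\bar c_k p_t(\gamma_j-\gamma_k)=\frac1{2\pi}\int e^{-t\xi^2}\bigl|\sum_j c_je^{i\gamma_j\xi}\bigr|^2d\xi>0$ unless $c=0$.
\end{itemize}
Each $x\mapsto p_t(x-\gamma)$ is real analytic in $x$, and $(t,x)\mapsto p_t(x-\gamma)$ solves $\partial_t u=\Delta_x u$. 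Hence $W_\alpha(t,\cdot)$ is analytic on $\R$ and interpolates $f_t$ on $\Gamma_\alpha$. It solves the heat equation provided the coefficients $a_k^{(\alpha)}$ do not depend on $t$.

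\emph{The main obstacle.} If $f_t$ genuinely depends on $t$, solving the system separately for each $t$ gives $t$-dependent coefficients $a(t)=P_t^{-1}f_t$, and then $\partial_tW=\Delta W+\sum_k a_k'(t)p_t(\cdot-\gamma_k)$ picks up an extra term. Interpolation and the heat equation are then compatible only when $f_t|_{\Gamma_\alpha}$ is itself of the form $P_t a$ for a fixed vector $a$. I would therefore read the statement as holding for each fixed $t$: the constants $a_k$ depend only on $U$ (and $t$), and the heat equation holds in the sense that $\partial_sW-\Delta W=0$ for $\tilde W(s,x)=\sum_k a_k p_s(x-\gamma_k)$ with the coefficients frozen at $s=t$. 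In the special case that $f_t=f$ is independent of $t$, this should be stated as the caveat it is. Alternatively one takes the literal reading only for those families for which $a(t)$ is constant.

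\emph{Gluing.} Apply Proposition \ref{glue} to the $W_\alpha(t,\cdot)$ to get $W_t\in\mathscr C^\infty(\R)$ with $W_t|_\Z=f_t$. By the preceding lemma, on $V_x=\bigcap_{x\in U_\alpha}U_\alpha$ the index set is constant, so there
\[
W_t=\frac1{g(x)}\sum_{\alpha:\,x\in U_\alpha}W_\alpha .
\]
This is a finite linear combination of translates $p_t(\cdot-k)$ with coefficients depending only on $V_x$. Relabelling gives the displayed local form with $U=V_x$. Being a finite combination of heat solutions, it also inherits analyticity and the heat equation locally.
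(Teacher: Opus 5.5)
You follow the paper's own route: the finite interpolation by heat-kernel translates behind \eqref{localW} on bounded charts, then Proposition \ref{glue}. Your diagnosis of the heat equation is correct. The coefficients in \eqref{localW} depend on $t$, so $\partial_t W=\Delta_x W$ is not justified there. The obstruction is also not an artifact of the ansatz. Suppose $u$ is any solution of the heat equation with $u(t,k)=f(k)$ for all $k\in\Z$ and $t>0$. Then $v(t,x)=u(t,j+x)+u(t,j-x)$ is even and real-analytic in $x$, with $\partial_x^{2n}v(t,0)=\partial_t^n v(t,0)=0$ for $n\geq 1$. Hence $v\equiv 2f(j)$, and evaluating at $x=1$ forces $f$ to be affine on $\Z$.

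The genuine gap is the gluing step, which you take, as the paper does, from the lemma preceding Proposition \ref{glue}. That lemma is false. For $v\in V_x$ one only gets $\{\alpha: x\in U_\alpha\}\subseteq\{\alpha: v\in U_\alpha\}$, not equality. Your own cover is a counterexample: $V_k=U_k=(k-1,k+1)$ and $g(k)=1$, but $g=2$ on $(k,k+1)$. Take $W_k(x)=f_t(k)e^{-(x-k)^2/4t}$. The averaged function then has $W_t(k)=f_t(k)$, while $\lim_{x\to k^+}W_t(x)=\tfrac12\bigl(f_t(k)+f_t(k+1)e^{-1/4t}\bigr)$, so it is not even continuous.

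Nor can the step be repaired while keeping the displayed local form. If $W_t$ agrees near every point with a finite sum of translates $p_t(\cdot-k)$, then $W_t$ is real-analytic on $\R$. By the identity theorem and connectedness, all these local sums coincide with one global sum $\sum_{k=1}^m a_k p_t(\cdot-k)$. That sum decays like a Gaussian, so it cannot interpolate, e.g., $f_t\equiv 1$ on $\Z$. So the statement as written is false.

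What does hold for fixed $t$ is one of the following:
\begin{enumerate}
\item A $\mathscr{C}^\infty$ interpolant $\sum_\alpha \chi_\alpha W_\alpha$ built from a genuine partition of unity subordinate to the cover. This loses analyticity, the local form and the heat equation.
\item An analytic interpolant given by the infinite Gaussian cardinal series $\sum_{\gamma\in\Z}a_\gamma p_t(\cdot-\gamma)$. Its matrix $(p_t(j-k))_{j,k\in\Z}$ has the strictly positive symbol $\sum_{n\in\Z}e^{-t(\xi+2\pi n)^2}$, so it is invertible on $\ell^2(\Z)$ and, by Wiener's lemma, on $\ell^\infty(\Z)$. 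But that sum is not locally finite.
\end{enumerate}
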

\begin{proof}
Apply proposition \ref{glue} 
to the functions given in equation \eqref{localW}. 
\end{proof}

Let $M$ be a compact Riemannian manifold.
Let $\{ \phi_j : j\in \Z_{\geq 1} \}$ be a complete set of orthonormal eigenfunctions
of $\Delta$. If $\lambda_j$ denotes the eigenvalue corresponding to $\phi_j$, 
we have an increasing, unbounded sequence $0 \leq \lambda_1 \leq \lambda_2 \leq \cdots$, 
where the eigenvalues are allowed to occur with repetitions. 
The kernel of the heat equation on $M$ is given by the formula \cite[Section (4)]{Fegan}
\begin{equation}\label{kernelM}
K(x,y,t) = \sum_{j \geq 1} \phi_j(x) \overline{\phi_j}(y) e^{-\lambda_j t}.
\end{equation}
Note that for any finite set $x_1, \ldots, x_m \in M$ and $c_1,\ldots,c_m \in \mathbb{C}$,
\[
\sum_{1 \leq i,k \leq m} c_i \overline{c}_k K(x_i, x_k, t) = \sum_{j} \left( \sum_i c_i \phi_j(x_i) \overline{\sum_k c_k \phi_j(x_k)} \right) e^{-t \lambda_j} \geq 0.
\]
Next, we specialize to the case where $M = G$ is a compact connected Lie group.
We would like to know when the positive semidefinite kernel is strictly positive definite.
Suppose for all $j $ that
\begin{equation} \label{stjerne}
    \sum_{1 \leq i\leq m} c_i \phi_j(x_i) = 0.
\end{equation}
By the Gelfand-Raikov theorem, the points of $G$ are separated by its irreducible unitary representations: For every pair $g, h\in G$ with $g\neq h$, there exists 
an irreducible unitary representation $\rho$ on a Hilbert space such that 
$\rho(g) \neq \rho(h)$.
Hence, the points of $G$ are separated by its matrix coefficients, which form a dense subspace in the algebra $\mathscr{C}(G)$ of continuous functions on $G$. 
In other words, $\mathscr{C}(G)$ is a separating algebra, where $1 \neq x \in G$ implies 
the existence of some
$\psi \in \mathscr{C}(G)$ such that 
\[
\psi(x) \neq \psi(1).
\]
In particular, for any finite set of distinct elements $g_1,\ldots, g_m \in G$ there exists $\psi \in \mathscr{C}(G)$ such that
\[
\psi(g_i) = \begin{cases}
    1, & i = 1, \\
    0, & i > 1.
\end{cases}
\]
Consequently,  we conclude that $c_1=0$ in \eqref{stjerne}. 
Then, by similar reasoning, we deduce that  
$c_2,\ldots, c_m = 0$, and therefore the kernel
$K$ in \eqref{kernelM} is strictly positive definite. \\

\noindent The heat equation has a natural generalization to any connected Lie group $G$. For a basis 
$X_1,\ldots, X_d$ of the Lie algebra $\mathfrak{g}$ of $G$, the universal enveloping algebra $\mathscr{U}(\mathfrak{g})$ contains the element
\begin{equation} \label{laplacian}
X_1^2+\cdots+X_d^2.     
\end{equation}
Let $\pi$ be the regular representation of $G$ on $L^2(G)$, and let $\Delta$ be the closure of the operator $\pi(X_1^2+\cdots+X_d^2)$. 
Then $\Delta$ is a self-adjoint negative operator \cite[p. 593]{EdwardNelson}, and 
the heat equation on $G$ is
\[
\partial_t u(t,g)= \Delta u(t,g) \quad t>0, g\in G.
\]
Recall that a vector $x$ in a Banach space $\mathscr{H}$ is said to be analytic with respect to an operator $A$ on $\mathscr{H}$ if $\sum_n \frac{1}{n!}||A^nx||s^n < \infty$
for all sufficiently small $s>0$.
Define, for $t>0$, the operator
\[
P^t = e^{t \Delta}.
\]
Then $P^tP^s = P^{t+s}$ and 
there exists a convolution semigroup $(p_t)_{t>0}$
 of nonnegative analytic functions $p_t \in \mathscr{C}^\omega(G)$
with unit mass
$\int_G p_t(g) dg = 1$ such that 
for any smooth sufficiently decaying function $f$ on $G$, 
\[
P^t f(g) = p_t * f (g) = \int_G f(h) p_t(g h^{-1}) dh, \quad g\in G.
\]
The semigroup $(p_t)$ is refered to as the heat kernel on $G$ and 
$p_t$ is an analytic vector with respect to the regular representation.
By \cite[Thm 4]{EdwardNelson}, any representation $(\pi, \mathscr{H})$ of a Lie group on a Banach space $\mathscr{H}$ has a dense set of analytic vectors.
The proof relies on the trick of smoothing vectors $x\in \mathscr{H}$ with the heat kernel: For every $t>0$, 
\[
P^t_\pi x := \int_G ( \pi(g)x ) p_t(g)dg
\]
exists in $\mathscr{H}$ and defines an analytic vector for $\pi$ 
with the property that 
$P^t_\pi x \to x $ as $t\searrow 0$.

\subsection{Riemannian symmetric spaces of noncompact type}
If $G$ is a Lie group in the \emph{Harish-Chandra class},
it comes with a tuple 
$(G,K, \theta, B)$ with the following properties: 
$G$ is reductive, $K$ is a maximal compact subgroup,
$\theta$ is a Cartan involution on the Lie algebra $\mathfrak{g}$, and $B$ is the Killing form on
$\mathfrak{g}\times \mathfrak{g}$ \cite[Ch. 6,7]{knappbeyond}.
For details on the background needed for this section, we refer to \cite{anker}.
Let $(G,K,\theta, B)$ be a reductive group in the Harish-Chandra class. The Cartan decomposition 
\begin{equation} \label{Cartan}
    \mathfrak{g} = \mathfrak{k} \oplus \mathfrak{p}
\end{equation}
is the eigenspace decomposition 
with respect to the eigenvalues $\{\pm 1\}$ of the Cartan involution $\theta$,
where the $1$--eigenspace $\mathfrak{k}$ is the Lie algebra of $K$
and the $(-1)$--eigenspace is $\mathfrak{p}$.
The Killing form $B$ on $\mathfrak{g}\times \mathfrak{g}$ defines a nondegenerate bilinear form
\begin{equation} \label{bilin}
    \langle X, Y \rangle = -B(X,\theta Y).
\end{equation}
For a maximal abelian subspace $\mathfrak{a}$ of $\mathfrak{p}$ with centralizer 
$\mathfrak{m} = Z_\mathfrak{k}(\mathfrak{a})$ in the compact Lie subalgebra $\mathfrak{k}$, the associated root space decomposition is
\begin{equation}
    \mathfrak{g} = \mathfrak{m} \oplus \mathfrak{a}   
    \oplus_{\alpha \in \Delta} \mathfrak{g}_\alpha.
\end{equation}

Choose a positive Weyl chamber $\mathfrak{a}_+$
corresponding to a system  
$\Delta_+$ of positive roots and a  
nilpotent subalgebra $\mathfrak{n}=\bigoplus_{\alpha \in \Delta_+}\mathfrak{g}_\alpha$.
The multiplicities $m_\alpha = \dim_\R \mathfrak{g}_\alpha$ 
of $\alpha \in \Delta_+$ belong to $\{0,1,2\}$,
and the symmetric space $G/K$ has dimension 
\begin{equation}
    n = \dim_\R \mathfrak{a}+ \sum_{\alpha\in \Delta_+} m_\alpha.
\end{equation}
The right regular representation $\pi$ of $G$
descends to an action of
$\mathfrak{g}$ on 
$\mathscr{C}^\infty(G)$ via differential operators
\begin{equation} \label{Liealgact}
d\pi(Y) f(g) = \lim_{t\searrow 0} \partial_t f(g e^{tY})
\end{equation}
for $Y\in \mathfrak{g}$ and $f\in \mathscr{C}^\infty(X)$. In turn, the action \eqref{Liealgact} extends uniquely to the enveloping algebra $\mathscr{U}(\mathfrak{g})$.
For orthonormal bases 
$\mathscr{B}(\mathfrak{k})$ and $\mathscr{B}(\mathfrak{p})$ with respect to \eqref{bilin},
the Casimir operator
\begin{align*}
    \Omega := 
     \sum_{Z\in \mathscr{B} (\mathfrak{k})}Z^2
     -
     \sum_{Y\in \mathscr{B}(\mathfrak{p})}Y^2
\end{align*}
defines an element, independently of the chosen bases, in the center of $\mathscr{U}(\mathfrak{g})$.
Identifying functions on $X$ with
$\pi(K)$--spherical functions on $G$,
the Laplacian $\Delta$ on $X$ may be defined 
for $f\in \mathscr{C}^\infty(G)$ and $g\in G$ by 
\begin{equation}
    \Delta f (gK)= d\pi(\Omega) f(g).
\end{equation}
For $A = \exp \mathfrak{a}$ in the Iwasawa decomposition $G=NAK$,
the projection $H: G \to \mathfrak{a}$ is defined by
\[
g = n e^{H(g)} k, \quad g\in G.
\]
The heat equation on the symmetric space $G/K$
is a parabolic evolution equation 
given in terms of the Laplacian $\Delta$ as
\[
\partial_t  u = \Delta u, \quad t>0,
\]
subject to a prescribed boundary condition $(x\mapsto u(0,x)) \in \mathscr{C}^\omega(X)$.
For $\lambda \in \mathfrak{a}^*$ and the half--sum 
$\rho := (1/2)\sum_{\alpha \in \Delta_+} m_\alpha \alpha$
of positive roots counted with multiplicities, 
the Laplacian eigenfunctions
\[
\Delta \varphi_\lambda = (|\lambda|^2+|\rho|^2) \varphi_\lambda
\]
are given by 
\[
\varphi_\lambda(x) = \int_K e^{\langle \sqrt{-1} \lambda - \rho, H(xk) \rangle} dk.
\]
For a Schwartz function $f$ on $G/K$,
the Fourier transform on the dual
$\mathfrak{a}^\ast$ was introduced by Harish-Chandra as
\[
\widehat{f}(\lambda) = \int_{G/K} f(x) \bar{\varphi}_\lambda(x)dx.
\]
If $\pi$ is a representation of a semigroup $S$ on a Hilbert space $H$,
then $\rho(s) = \langle \pi(s) \xi,  \xi \rangle_{H}$ is positive definite on $S$.
The Plancherel formula gives an isomorphism
\[
L^2(K\backslash G / K) \simeq L^2( \mathfrak{a} , |\mathbf{c}(\lambda)|^{-2}d\lambda)^{\operatorname{Weyl}_G}
\]
implemented by the Fourier transform $\mathscr{F}f(x) = \int_G f(x) \varphi_\lambda(x) dx$. The spherical functions are given as matrix coefficients 
\[
\varphi_\lambda(x) = \langle \pi(x) 1, 1 \rangle_{L^2(K/M)}\]
for $\lambda \in \mathfrak{a}^\ast$, 
and the expression $\varphi_\lambda(y^{-1}x) = \langle \pi_\lambda(x)1,\pi_\lambda(y)1 \rangle_{L^2(K/M)}$ shows that $\varphi_\lambda$ is positive definite on $X$.

\begin{lemma}
The heat kernel is strictly positive definite on $X$.
For every fixed $t>0$ and $\{x_k \in X: 1 \leq k \leq m\}$, the
functions 
$\{p_t(x_k, \cdot) \in \mathscr{C}^\omega(X) : 1 \leq k \leq m\}$ form a linearly independent family of coherent states on $X$. 
\end{lemma}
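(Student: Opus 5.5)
The plan is to write the heat kernel through the spherical Fourier transform and reduce strict positivity to the injectivity of an integral transform on analytic eigenfunctions. For fixed $t>0$, the heat kernel on $X=G/K$ is $K$--biinvariant and has spherical transform $\widehat{p}_t(\lambda)=e^{-t(|\lambda|^2+|\rho|^2)}$. Spherical inversion then gives
\[
p_t(y^{-1}x)=\int_{\mathfrak{a}^\ast} e^{-t(|\lambda|^2+|\rho|^2)}\varphi_\lambda(y^{-1}x)\,|\mathbf{c}(\lambda)|^{-2}\,d\lambda .
\]
The representation $\varphi_\lambda(y^{-1}x)=\langle \pi_\lambda(x)1,\pi_\lambda(y)1\rangle_{L^2(K/M)}$ recalled above, combined with $\pi_\lambda(x)1(k)=e^{\langle \sqrt{-1}\lambda-\rho,H(x^{-1}k)\rangle}$, turns each Hermitian form into an integral of a square. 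For distinct points $x_1,\dots,x_m$ and scalars $c_1,\dots,c_m$ we get
\[
\sum_{j,k}c_j\bar c_k\,p_t(x_k^{-1}x_j)=\int_{\mathfrak{a}^\ast}e^{-t(|\lambda|^2+|\rho|^2)}\Big\|\sum_j c_j\,\pi_\lambda(x_j)1\Big\|_{L^2(K/M)}^2|\mathbf{c}(\lambda)|^{-2}\,d\lambda\ \geq 0 .
\]
This mirrors the compact-group computation with \eqref{kernelM}, with the sum over eigenfunctions replaced by the Plancherel integral. Positive semidefiniteness is therefore immediate.

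For strictness, assume the form vanishes. The integrand is continuous in $\lambda$ and the Plancherel density $|\mathbf{c}(\lambda)|^{-2}$ is positive almost everywhere, so $F_\lambda(k):=\sum_j c_j e^{\langle\sqrt{-1}\lambda-\rho,H(x_j^{-1}k)\rangle}$ must vanish for all $\lambda\in\mathfrak{a}^\ast$ and all $k\in K$. Since $(\lambda,k)\mapsto F_\lambda(k)$ is entire in $\lambda$, the identity extends to all $\lambda\in\mathfrak{a}^\ast_\C$. Fixing $k$, the function $\lambda\mapsto F_\lambda(k)$ is a finite exponential sum $\sum_j c_j' e^{\sqrt{-1}\langle\lambda,a_j(k)\rangle}$ with frequencies $a_j(k)=H(x_j^{-1}k)\in\mathfrak{a}$. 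Such a sum vanishes identically exactly when the coefficients cancel within each group of equal frequencies. The problem is thus reduced to showing that some $k$ separates the frequencies.

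The main obstacle is exactly this frequency-separation step: find $k\in K$ (equivalently a boundary point $kM\in K/M$) such that the horocyclic projections $H(x_j^{-1}k)$ are pairwise distinct. My first attempt would be a genericity argument. For fixed $i\neq j$, the set $\{k: H(x_i^{-1}k)=H(x_j^{-1}k)\}$ is a real-analytic subset of $K/M$. If it were all of $K/M$, then $x_i$ and $x_j$ would lie on the same horocycle for every boundary direction. Letting the direction vary, this forces their Busemann functions to agree for every direction, and hence $x_i=x_j$ by the usual injectivity of the horocyclic Radon transform of points. Therefore each such set is a proper analytic subset, hence nowhere dense, and a finite union of them cannot exhaust $K/M$. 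Choosing $k$ outside the union makes all frequencies distinct, giving $c_j e^{-\langle\rho,H(x_j^{-1}k)\rangle}=0$ and so $c_j=0$ for all $j$, which is strict positivity.

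Linear independence of the coherent states then follows formally. If $\sum_k c_k p_t(x_k,\cdot)=0$, evaluate at each $x_j$, multiply by $\bar c_j$, and sum; this makes the quadratic form vanish, so $c=0$. Analyticity of $p_t(x_k,\cdot)$ comes from the earlier discussion of the heat kernel as an analytic vector, pushed down to $G/K$ by $K$--invariance.
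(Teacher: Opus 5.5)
Your proof is correct, but the key strictness step takes a different route from the paper's.

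\textbf{The paper's route.} The paper uses $p_t=p_{t/2}*p_{t/2}$ to write the form as $\int_X|\sum_k c_k p_{t/2}(x_k,g)|^2\,dg$. Vanishing then means $e^{(t/2)\Delta}\mu=0$ for $\mu=\sum_k c_k\delta_{x_k}$. The paper concludes $\mu=0$ from the nonvanishing of $\widehat{p}_t(\lambda)=e^{-t(|\lambda|^2+|\rho|^2)}$ and a brief ``uniqueness'' remark, then finishes with test functions separating the $x_k$.

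\textbf{How the two compare.} By Plancherel for the Helgason transform, that physical-space square is exactly your frequency-side integral, so the semidefiniteness parts agree. The difference is in strictness. The paper leaves implicit the injectivity of the heat semigroup on finite Dirac combinations; the spherical transform alone cannot give this, since $\mu$ is not $K$-invariant. You prove the injectivity by hand: the functions $\pi_\lambda(x_j)1$ carry the full Helgason transform of $\mu$. A boundary point $kM$ separating the frequencies $H(x_j^{-1}k)$ then reduces everything to linear independence of exponentials on $\mathfrak{a}^\ast$.

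\textbf{What your route still needs.} It is more self-contained, but it rests on two facts you only sketch.
\begin{enumerate}
\item $K/M$ is connected. This holds in the Harish-Chandra class, where $M$ meets every component of $K$. It is what makes the zero set of a real-analytic function that is not identically zero nowhere dense.
\item The Iwasawa projections separate points. This is cleaner done directly than through the horocyclic Radon transform. Take $H$ in $G=KAN$, as your formula for $\pi_\lambda(x)1$ requires, and set $g=g_j^{-1}g_i$. The cocycle identity gives $H(g_i^{-1}k)-H(g_j^{-1}k)=H(g^{-1}k')$, where $k'=k(g_j^{-1}k)$ ranges over all of $K$. Writing $g=k_1ak_2$ and taking $k'=k_1$ yields $-\log a$, which vanishes only if $x_i=x_j$.
\end{enumerate}

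\textbf{What the paper's route buys.} It is shorter and needs no boundary geometry, provided its injectivity step is justified. One way: $\Delta$ is self-adjoint on $L^2(X)$, so $e^{\tau\Delta}$ is injective there; apply this to $p_s*\mu$, and use $p_s*\mu\to\mu$ as $s\searrow 0$.
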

\begin{proof}
From the semigroup property $p_t = (p_{t/2})^{ \ast 2}$ 
and Fubini's theorem, 
\[
p_t(x,y) = \int_{X} p_{t/2}(x,g)p_{t/2}(g,y) dg. 
\]
For scalars $\{c_k \in \C: 1\leq k \leq m\}$
and distinct points $\{x_k \in X: 1\leq k \leq m \}$
we obtain as a consequence  
\begin{align*}
    \sum_{ j,k} c_j \Bar{c}_k p_t(x_j,x_k) 
    =\int_X \Bigg| \sum_{k}c_k p_{t/2}(x_k, g) \Bigg|^2 dg.
\end{align*}
Suppose $(g \mapsto \sum_{k}c_k p_{t/2}(x_k, g) )\in \mathscr{C}^\infty(X)$ vanishes identically on $X$. 
If $\delta_{x}$ is the Dirac mesaure concentrated at $x \in X$,
then for every $\phi \in \mathscr{C}_c^{\infty}(X)$,
\[\langle e^{ t \Delta } \delta_x , \phi \rangle = \langle \delta_x,  e^{t\Delta}\phi \rangle = \langle p_{t}(x, \cdot), \phi \rangle,\]
which implies
\[ 0 = \bigg\langle  
\sum_{k} c_k p_{t/2}(x_k, \cdot) , \phi \bigg\rangle 
= \bigg\langle  
e^{(t/2)\Delta} \Big( \sum_{k} c_k \delta_{x_k} \Big)
,\phi \bigg\rangle.
\]
With respect to the Dirac measure $\delta$ at the identity coset $eK \in X$, recall that $\partial_t(p_{t} * \delta) = \Delta(p_t\ast \delta)$ in the sense of 
 distributions and $\lim_{t\searrow 0}(p_t \ast \delta) = \delta$. Passing through Fourier transforms, 
 $\partial_t \widehat{p}_t = \widehat{\Delta}  (\widehat{p}_t)
$
 subject to 
 $\lim_{t\searrow 0}\widehat{p}_t= 1$.
Moreover, if we define
\[
h_t(\lambda) := e^{-(|\rho|^2+|\lambda|)t}, \quad \lambda \in \mathfrak{a}^\ast,
\]
and differentiate in $t$, the result is
$\partial_t h_t = \widehat{\Delta} h_t$ and $\lim_{t\searrow 0} h_t = 1$ constantly on $\mathfrak{a}^\ast$. 
By uniqueness, 
$
\widehat{h}_t = p_t,
$
and it follows that $\sum_k c_k \delta_{x_k} = 0$.
Choosing smooth compactly supported test vectors $\varphi_k \in \mathscr{C}^\infty_c(X)$ such that $\varphi_k(x_k)=1$ and $\varphi_k(x_j)=0$ for $j\neq k$ show that 
$c_k = 0$ for all $k\in \{1..m\}$, as desired.
\end{proof}

\begin{corollary} \label{intrep}
    The heat kernel has the integral representation
    \[
    p_t(x,y) = \int_{\mathfrak{a}^\ast} m_t(\lambda) \varphi_\lambda(h^{-1}g) \frac{d\lambda}{|\mathbf{c}(\lambda)|^2}
    \]
    for $x = gK$ and $y=hK$ in $G/K$
    where $m_t(\lambda) = e^{-t(|\lambda|^2+|\rho|^2)}$.
\end{corollary}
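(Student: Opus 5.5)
The plan is to obtain the integral representation from the spherical inversion formula applied to the $K$--bi-invariant function $p_t$ on $G$, and then to pass to two variables via $G$--invariance of the heat kernel. First I use $G$--invariance of $\Delta$: since $\Delta$ commutes with left translations, $p_t(x,y)$ depends only on the relative position of $x$ and $y$. Thus $p_t(gK,hK)=p_t(h^{-1}g)$, where $p_t$ on the right is the $K$--bi-invariant convolution kernel on $G$, i.e.\ the function with $P^tf=f*p_t$. The heat kernel is also symmetric, $p_t(x,y)=p_t(y,x)$, as the kernel of the self-adjoint operator $e^{t\Delta}$. Hence it suffices to prove
\[
p_t(g)=\int_{\mathfrak{a}^\ast} m_t(\lambda)\,\varphi_\lambda(g)\,\frac{d\lambda}{|\mathbf{c}(\lambda)|^2}, \qquad g\in G .
\]

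Second, I identify the spherical transform $\widehat{p}_t(\lambda)=\int_{G/K}p_t(x)\bar\varphi_\lambda(x)\,dx$. The proof of the preceding Lemma already gives $\widehat{p}_t=m_t$: both functions satisfy the transformed heat equation $\partial_t u=\widehat{\Delta}u$ with initial value $1$. Concretely, $\varphi_\lambda$ is a $\Delta$--eigenfunction with eigenvalue $-(|\lambda|^2+|\rho|^2)$, with the sign fixed so that $\Delta$ is negative. Moving $\Delta$ across the integral using self-adjointness gives the ODE $\partial_t\widehat p_t(\lambda)=-(|\lambda|^2+|\rho|^2)\widehat p_t(\lambda)$. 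Combined with $\widehat p_t\to 1$ as $t\searrow 0$, which holds because $p_t\to\delta$ and $\varphi_\lambda(eK)=1$, this gives $\widehat p_t(\lambda)=m_t(\lambda)$.

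Third, I apply Harish-Chandra's spherical inversion formula, the inverse of the Plancherel isomorphism recalled above:
\[
f(g)=c\int_{\mathfrak{a}^\ast}\widehat f(\lambda)\varphi_\lambda(g)\,|\mathbf{c}(\lambda)|^{-2}d\lambda .
\]
Here the constant $c$, together with the factor $|\mathrm{Weyl}_G|^{-1}$, is absorbed into the normalization of $d\lambda$. The formula holds pointwise for $K$--bi-invariant functions in Harish-Chandra's $L^2$--Schwartz space. The heat kernel belongs to this space: it has Gaussian decay, as the explicit complex-case formula in the Theorem shows, and the Anker estimates cover the general case. Moreover $m_t$ is rapidly decreasing on $\mathfrak{a}^\ast$, while $|\mathbf{c}(\lambda)|^{-2}$ grows only polynomially, so the integral converges absolutely and the formula holds everywhere rather than merely almost everywhere. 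Substituting $g\mapsto h^{-1}g$ then yields the stated identity.

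The main obstacle is justification rather than algebra. One has to verify that $p_t$ lies in the Schwartz space, or at least in $L^1\cap L^2$ with integrable transform, so that inversion applies pointwise. One also has to make the uniqueness argument for $\widehat p_t$ rigorous. For the latter I would avoid uniqueness altogether: define $q_t$ by the right-hand integral, check directly that it solves the heat equation with $q_t\to\delta$, and invoke the uniqueness of the minimal heat kernel. The theorems of Anker cited in the paper provide the needed decay.
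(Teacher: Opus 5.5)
Your proposal is correct and follows the route the paper itself takes, which is only sketched inside the proof of the preceding Lemma. There the paper identifies $\widehat{p}_t=m_t$ by solving the transformed heat equation with initial value $1$, and then spherical inversion gives the integral representation; your version adds the missing justifications and corrects the paper's eigenvalue sign and its $|\lambda|$ versus $|\lambda|^2$ slip. For the Schwartz-space step, prefer your fallback: define $q_t$ by the integral and identify it with the kernel of $e^{t\Delta}$. The complex-case formula and the Anker estimates are usually derived from this very representation, so citing them would be mildly circular.
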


Let $G$ be a complex group. Then for all $H\in \mathfrak{a}$ and $t>0$,
\begin{equation} \label{Gcomplex}
p_t(e^H) = (4\pi t)^{-n/2}e^{-|\rho|^2 t - |H|^2/4t} \prod_{\alpha \in \Delta_+} \bigg( \frac{\operatorname{sinh}\langle \alpha, H \rangle }{\langle \alpha, H \rangle}\bigg)^{-1/2}
\end{equation}
Note that 
$
p_t(e^H) =(4\pi t)^{-n/2}e^{-|\rho|^2 t} \kappa(e^H, e^H)
$
where
\[
\kappa(e^H, e^{H'}) =  e^{- \langle H, H' \rangle/4t}
   \prod_{\alpha > 0} \bigg( \frac{\operatorname{sinh} \sqrt{\langle \alpha, H \rangle}\sqrt{\langle \alpha, H' \rangle}}{\sqrt{\langle \alpha, H \rangle}\sqrt{\langle \alpha, H' \rangle}} \bigg)^{-1/2}
\]

\begin{lemma} The function
    $\kappa$
    is a strictly positive definite kernel on $\mathfrak{a}^+\times \mathfrak{a}^+$
\end{lemma}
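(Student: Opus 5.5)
The plan is to split $\kappa$ into elementary factors, show that each factor is (strictly) positive definite on $\mathfrak{a}^+\times\mathfrak{a}^+$, and then recombine them with the Schur-type Lemma \ref{Schur}. Write
\[
\kappa(e^H,e^{H'}) = E(H,H')\prod_{\alpha\in\Delta_+} S_\alpha(H,H'),
\]
where
\[
E(H,H') = e^{-\langle H,H'\rangle/4t}, \qquad S_\alpha(H,H') = \Big(\tfrac{\sinh\sqrt{u_\alpha(H)u_\alpha(H')}}{\sqrt{u_\alpha(H)u_\alpha(H')}}\Big)^{-1/2}, \qquad u_\alpha(H)=\langle\alpha,H\rangle.
\]
On $\mathfrak{a}^+$ we have $u_\alpha(H)>0$ for every $\alpha\in\Delta_+$, so all square roots are real and positive and every factor is a smooth, strictly positive function. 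By Lemma \ref{Schur}, iterated over the finitely many factors, it suffices to prove that the product of the $S_\alpha$ is positive semidefinite and that $E$ is strictly positive definite. The Schur argument in that lemma only uses strictness of one factor, so semidefiniteness of the others is enough there.

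For each $S_\alpha$ I would use Lemma \ref{schoenberg}. Set
\[
\psi_\alpha(H,H') = \log\frac{\sinh\sqrt{s}}{\sqrt{s}}, \qquad s=u_\alpha(H)u_\alpha(H').
\]
Then $S_\alpha = e^{-\frac12\psi_\alpha}$. The function $F(s)=\log(\sinh\sqrt s/\sqrt s)$ is given by a power series in $s$ with $F(0)=0$. Since $(H,H')\mapsto u_\alpha(H)u_\alpha(H')$ is a rank-one positive semidefinite kernel, and $\psi_\alpha$ is a function of this kernel alone, the negative definiteness of $\psi_\alpha$ reduces to a one-variable statement about $F$. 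Concretely, one checks the sign pattern of the Taylor coefficients of $F$ (equivalently, of the logarithmic derivative of $\sinh\sqrt s/\sqrt s$, expanded via the product $\prod_{k\ge1}(1+s/k^2\pi^2)$). Since $\log(1+s/k^2\pi^2)$ is a sum of terms $c_k(s)$ to which Schoenberg's criterion applies one factor at a time, Lemma \ref{schoenberg} with parameter $1/2$ then yields positive semidefiniteness of $S_\alpha$.

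For $E$, I would again invoke Lemma \ref{schoenberg}, now with $\psi(H,H')=\langle H,H'\rangle/4$ and the Schoenberg parameter $1/t$. The needed input is that $\psi$ is negative definite in the sense used in Lemma \ref{schoenberg} on the cone $\mathfrak{a}^+$. For strictness, take distinct $H_1,\dots,H_m\in\mathfrak{a}^+$ and suppose $\sum_{j,k}c_j\bar c_k E(H_j,H_k)=0$. Expanding $E$ against the Gaussian representation of Corollary \ref{intrep} (for complex $G$ the spherical transform is essentially Euclidean on $\mathfrak{a}$), the quadratic form becomes an integral of $\big|\sum_k c_k e^{\langle \lambda, H_k\rangle}\big|^2$ against a positive density. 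Linear independence of the distinct exponentials $e^{\langle\cdot,H_k\rangle}$ then forces every $c_k=0$.

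The main obstacle is precisely this step: establishing the correct sign of the bilinear exponent. The factor $e^{-\langle H,H'\rangle/4t}$ has the opposite sign of the usual Gaussian feature-map kernel, so the negative-definiteness input to Lemma \ref{schoenberg} must be verified carefully on $\mathfrak{a}^+$ rather than on all of $\mathfrak{a}$. My first attempt would be to complete the square, writing $-|H|^2/4t$ on the diagonal against $\langle H,H'\rangle$. I would then absorb the diagonal terms $e^{\pm|H|^2/8t}$ as a conjugation $D\,(\cdot)\,D$ by a positive diagonal matrix, which preserves strict positive definiteness, and so reduce to a translation-invariant Gaussian kernel, which is strictly positive definite by Bochner's theorem (the integral-representation lemma of Section \ref{sec:pre}).
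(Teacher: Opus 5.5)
Both inputs you feed into Lemma \ref{schoenberg} are false, so the argument breaks at each factor.

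For $E$: $\langle H,H'\rangle$ is a Gram kernel, hence positive rather than negative definite. The two-point test with $c=(1,-1)$ gives $|H_1-H_2|^2>0$. Completing the square actually gives
\[
e^{-\langle H,H'\rangle/4t}=e^{-|H|^2/8t}\,e^{-|H'|^2/8t}\,e^{+|H-H'|^2/8t}.
\]
So after your diagonal conjugation you are left with the \emph{growing} kernel $e^{+|H-H'|^2/8t}$, not a Gaussian. Its $2\times 2$ determinant $1-e^{|H-H'|^2/4t}$ is negative. The same obstruction rules out your integral-representation argument for strictness: anything of the form $\int|\sum_k c_k\phi(\lambda,H_k)|^2\,d\mu(\lambda)$ is positive semidefinite, and $E$ is not.

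For $S_\alpha$: $F(s)=s/6-s^2/180+\cdots$, and $u\mapsto F(e^u)$ is strictly convex, being the logarithm of a series in $e^u$ with positive coefficients. The test $c=(1,-1)$ therefore gives $F(a^2)+F(b^2)-2F(ab)>0$ for $a\neq b$, so $\psi_\alpha$ is not negative definite. Neither is any single term $\log(1+ab/k^2\pi^2)$, since $(1+a^2/c)(1+b^2/c)-(1+ab/c)^2=(a-b)^2/c>0$. No sign pattern of the higher Taylor coefficients can rescue this, because the linear coefficient $1/6$ is already positive.

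These gaps cannot be closed, because the statement itself is false. Write $\sigma_\alpha(H,H')=\sinh\sqrt{ab}/\sqrt{ab}$ with $a=\langle\alpha,H\rangle$ and $b=\langle\alpha,H'\rangle$, so that $S_\alpha=\sigma_\alpha^{-1/2}$. Since $\sigma_\alpha$ is an $\ell^2$ Gram kernel (the paper's feature map $(a^m/\sqrt{(2m+1)!})_m$), Cauchy--Schwarz gives $\sigma_\alpha(H,H')^2\le\sigma_\alpha(H,H)\,\sigma_\alpha(H',H')$. Equivalently, $S_\alpha(H,H)\,S_\alpha(H',H')\le S_\alpha(H,H')^2$. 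Meanwhile $E(H,H)\,E(H',H')<E(H,H')^2$ is equivalent to $|H-H'|^2>0$. Multiplying these,
\[
\kappa(e^H,e^H)\,\kappa(e^{H'},e^{H'})<\kappa(e^H,e^{H'})^2
\]
for all distinct $H,H'\in\mathfrak{a}^+$. So every $2\times 2$ matrix $(\kappa(e^{H_i},e^{H_j}))_{i,j}$ has negative determinant, and $\kappa$ is not even positive semidefinite.

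The paper's proof does not avoid this. It proves positivity of the translation-invariant Gaussian $e^{-|H-H'|^2/4t}$ in place of $e^{-\langle H,H'\rangle/4t}$. It also proves positivity of $\sigma_\alpha$ itself, silently dropping the exponent $-1/2$. Its argument therefore addresses the different kernel $e^{-|H-H'|^2/4t}\prod_\alpha\sigma_\alpha$, not $\kappa$. You correctly identified the sign of the bilinear exponent as the crux. The right conclusion is that the lemma must be reformulated, not that a more careful argument will establish it.
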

\begin{proof}
The kernel $\varphi(x,y) = e^{-|x-y|^p}$
is positive definite on $\R \times \R$ if and only if $p\leq 2$.
By Lemma \ref{Schur} and induction,
we deduce that the function
\begin{align}
 H \mapsto   e^{-|H|^2/4t} = \prod_{k=1}^{\operatorname{rk}_\R G } 
    e^{-H_k^2/4t}
\end{align}
is positive definite on $\bar{\mathfrak{a}}_+$,
where $\operatorname{rk}_\R G = \dim_\R \mathfrak{a}$ is the real rank of $G$.
Next, since the function $\psi(H):= \langle \rho, H \rangle$ is negative definite \cite[Thm. 3.20]{semigroups},
Lemma \ref{schoenberg} implies that $e^{-\psi}$ is positive definite.
It suffices to show the statement for
    \begin{align*}
    \kappa_1(e^H,e^{H'}) &= \frac{\operatorname{sinh} \sqrt{\langle \alpha, H \rangle}\sqrt{\langle \alpha, H' \rangle}}{\sqrt{\langle \alpha, H \rangle}\sqrt{\langle \alpha, H' \rangle}}\\
    &= \sum_{m=0}^\infty  \frac{1}{(2m+1)!} \langle \alpha, H \rangle^m \langle \alpha, H' \rangle^m \\
    &= \Bigg \langle \bigg( \frac{\langle \alpha, H \rangle^m}{\sqrt{(2m+1)!}} \bigg)_{m \in \N_0}, 
     \bigg( \frac{\langle \alpha, H' \rangle^m}{\sqrt{(2m+1)!}} \bigg)_{m \in \N_0}
    \Bigg\rangle_{\ell^2(\mathbb{N}_0)}.
     \end{align*}
Suppose
    \begin{align*}
      0 =   \sum_{i,j} c_i \bar{c}_j \kappa_1(e^{H_i},e^{H_j}) &= \vert\vert 
        ( 1/(2m+1)!) \sum_i c_i  \langle \alpha, H_i \rangle^m )_{m\geq 0} 
        \vert\vert^2_{\ell^2(\N_0)}\\
        &=\sum_{m=0}^\infty |1/(2m+1)!|^2 \left|\sum_i c_i \langle \alpha, H_i \rangle^m \right|^2.
    \end{align*}
Since the coefficients of the power series expansion
$
\frac{
\operatorname{sinh} z}{z} = \sum_{m=0}^\infty \frac{1}{(2m+1)!} z^{2m}
$ are strictly positive we deduce that $\sum_i c_i \langle \alpha, H_i \rangle^m  = 0$ for all $m$. If $\langle \alpha, H_i \rangle$ are distinct for all $i$ then every $c_i$ vanishes. 
The lemma follows, then, by noting that in the decomposition $G=K\exp \bar{ \mathfrak{a}}^+ K$, the $\exp \mathfrak{a}^+$--component of any $g\in G$ is unique.
\end{proof}

\subsection{Spectral cutoffs} \label{PW}

With respect to the decomposition 
$G = K \exp \bar{\mathfrak{a}}_+ K$, there is a general estimate for the heat kernel 
due to Anker and Ostellari \eqref{Anker} of the form 
\begin{align} \label{Anker}
    p_t(e^H) \asymp
    \bigg( \frac{
e^{ -\langle \rho, H \rangle - |H|^2/4t}}{e^{|\rho|^2 t} \sqrt{t}^{n}} \bigg)
    \prod_{\alpha}\Big(1+\langle \alpha, H \rangle \Big) \Big(1+t+\langle \alpha, H \rangle \Big)^{\frac{1}{2}(m_\alpha + m_{2\alpha})-1}
\end{align}
for all $H\in  \bar{\mathfrak{a}}_+$ 
and all $t>0$,
where the product is over the set of all nondivisible roots 
$\Delta_+^0$ in $\Delta_+$. For related formulas and details, we refer to \cite{anker2}. We have kept the assumption that $G$ is complex and the associated formula \eqref{Gcomplex}
because the sign of the kernel function that arises from \eqref{Anker} 
seems more difficult to control.

Next, we connect the heat kernel to the setting of Paley-Wiener spaces, which we verify are naturally reproducing kernel spaces.
Let $\Omega \subseteq \R^n$ be a bounded set. 
The orthogonal projection 
\[
P_\Omega: L^2(\R^n) \to L^2(\Omega)
\]
is multiplication by the indicator function on $\Omega$,
\[
P_\Omega f = 1_\Omega f, \quad f\in L^2(\R^n).
\]
The image under the Fourier transform of the closed subspace $L^2(\Omega)$ is the Paley-Wiener space $
PW_\Omega(\R^n)$, and
if 
$
K:= \widehat{1_\Omega},
$
there is a commutative diagram
\[
 \begin{tikzcd}
 L^2(\R^n) \arrow[r, "\widehat{}"]  \arrow[d, "1_\Omega \cdot"] &  L^2(\R^n) \arrow[d, "K*"] \\
L^2(\Omega) \arrow[r, "\widehat{}"] & PW_\Omega
\end{tikzcd}
\]
The idempotence $P^{2}_\Omega = P_\Omega$ of the orthgonol projection implies  for any $f\in PW_\Omega(\R^n)$ that 
\begin{equation}\label{K_conv_f}
K * f = f.
\end{equation}
Letting $K(x,\xi) := K(x-\xi)$ and $e_x (\xi) := K(\xi,x)$,
and noting that $\overline{K(x,\xi)} = K(\xi,x)$,
the identity \eqref{K_conv_f}
can be stated as 
\[
f(x)
=
\int_{\R^n} f(\xi) \overline{K(\xi,x)} d\xi
= \langle f, e_x \rangle_{L^2(\R^n)}.
\]
For example, if $\Omega$ is a ball in $\R^d$, then $K(x) \,  \propto  \, ||x||^{-d/2} J_{d/2}(||x||)$
where $J_{d/2}$ is a Bessel-function. For $d=1$, 
\[
J_{1/2}(x) = \sqrt{\frac{2}{\pi }}  \frac{\sin x}{\sqrt{x}}, \quad K(x) \, \propto \, \frac{\sin x}{x} = 1 - \frac{x^2}{3!} + \frac{x^4}{5!} - \cdots 
\]
In $PW_\Omega(\R)$, note that
    \[
\widehat{1_\Omega\cdot e^{i\lambda}} (x) = \widehat{1_\Omega} * \widehat{e^{i\lambda}}  (x) =
\widehat{1_\Omega} (x-\lambda) = e_\lambda(x) = \operatorname{sinc}(x-\lambda)
\]
For $\lambda \in \pi \Z$, we recover the orthonormal basis with respect to the cardinal sine function from the Whittaker-Shannon expansion.

Let
$G$ be a reductive group in the Harish-Chandra class,
let $\nu$ be the Plancherel measure on the unitary dual $\widehat{G}$,
and for a fixed spectral set $\Omega$, let
\[
PW_\Omega(G/K) = \{ f \in L^2(G/K):  \operatorname{supp} \widehat{f} \subseteq \Omega \}
\]
\[
K(x,y) = \int_{\Omega} \varphi_\lambda(x) \overline{\varphi_\lambda}(y) d\nu(\lambda)
\]
For $\mu = \sum_{\gamma \in \Gamma} a_\gamma \delta_\gamma$,
\begin{align*}    
\widehat{\mu}(\lambda) &= \int_X \overline{\varphi_{\lambda}}(x) d\mu(x)
=\sum_{\gamma \in \Gamma} a_\gamma \overline{\varphi_{\lambda}}(\gamma)\\
\end{align*}
and
\begin{align*}
\mathscr{F}^{-1}\left(
\widehat{\mu}\vert_{\Omega}
\right)(x) &= \int_{\Omega} \widehat{\mu}(\lambda) 
\varphi_{\lambda}(x)  d\nu(\lambda) \\
&=\sum_{\gamma \in \Gamma} a_\gamma \int_{\Omega}\varphi_\lambda(x) \overline{\varphi_\lambda}(\gamma) d\nu(\lambda)\\
&= \sum_{\gamma \in \Gamma} a_\gamma K(x, \gamma)
\end{align*}

For arbitrary complex scalars $c_1,\ldots,c_d$, we have with respect to the measure
$\mu_c := \sum_{\gamma_i \in \Gamma} c_i \delta_{\gamma_i}$ that
\begin{align*}
    \sum_{i,j} c_i \overline{c_j} K(\gamma_i,\gamma_j)
    &= \int_\Omega \bigg\vert \sum_{\gamma_i} c_i \overline{\varphi_\lambda}(\gamma_i)\bigg \vert^2 d\nu(\lambda) \\
    &= \int_\Omega \Big \vert \widehat{\mu_c }(\lambda) \Big \vert^2 d\nu(\lambda).
\end{align*}
from which strict positivity can be deduced. 
We conclude that there is a correspondence 
\[
\mu_\Gamma = \sum_{\gamma\in \Gamma} a_\gamma \delta_\gamma \longmapsto \mathscr{F}^{-1} \left( \widehat{\mu_{\Gamma}}\big|_{\Omega} \right) = \sum_{\gamma \in \Gamma} a_\gamma K(\cdot , \gamma) \in PW_{\Omega}(X)
\]
between Dirac measures $\mu_\Gamma$ and linear combinations of linearly independent coherent states in
$PW_\Omega(X)$.

For $\lambda\in \mathfrak{a}^\ast$, $k \in K$ and $x\in X=G/K$, denote
\[
\phi_{\lambda, k} (x) = e^{\langle \sqrt{-1}\lambda - \rho, H(xk) \rangle}
\]
so that 
\[
\varphi_\lambda(x) =\int_{K} \phi_{\lambda,k}(x) dk.
\]
Fix a bounded set $\Omega \subset \mathfrak{a}^\ast$ that is invariant under the Weyl group. 
Then for $x = gK$ and $y=hK$ in $X=G/K$, 
\begin{align*}    
K(x,y) &= \int_{\Omega} \int_{K/M}  \phi_{\lambda, k} (x)
\overline{\phi_{\lambda, k}}(y) 
dk \frac{d\lambda}{|\mathbf{c}(\lambda)|^2}\\
&=\int_\Omega \varphi_\lambda(h^{-1}g) \frac{d\lambda}{|\mathbf{c}(\lambda)|^2}
\end{align*}

For $\Gamma=\{\gamma_1,\ldots,\gamma_{d}\} \subseteq X$, 
and complex scalars $C =\{c_1,\ldots,c_d\}$ note that 
\begin{align*}
    \sum_{i,j} c_i \overline{c_j} K(\gamma_i,\gamma_j) &=\int_\Omega \int_{K/M} 
\vert \widehat{\mu}(\lambda,k) \vert^2
    dk \frac{d\lambda}{|\mathbf{c}(\lambda)|^2}
\end{align*}
where 
\begin{align*}
    \widehat{\mu}(\lambda,k) = \sum_{j} c_j \overline{ \phi_{\lambda, k}}(\gamma_j).
\end{align*}

Fix $t>0$, let 
\[
m_t(\lambda) := e^{-(|\lambda|^2+|\rho|^2)t},
\]
and suppose we change the spectral cutoff function from $1_\Omega$ 
to $m_t(\lambda)$.
For every $x = g_x K$ and $y=g_yK$ in $G/K$, the Paley-Wiener kernel
\[
K(x,y) = \int 1_\Omega \varphi_\lambda(g_y^{-1}g_x) d\nu(\lambda)
\]
changes into 
\begin{align*}
    K_t(x,y) = \int_{\mathfrak{a}^\ast} m_t(\lambda)  \varphi_{\lambda}(g_y^{-1}g_x) 
   d\nu(\lambda)
\end{align*}
which we recognize as $p_t(x,y)$ by Corollary \ref{intrep}.

\subsection{The main result}
    
    \begin{theorem} Let $G$ be a reductive Lie group in the Harish-Chandra class with a maximal compact subgroup $K$, and suppose 
$X=G/K$ is a noncompact Riemannian symmetric space.
For a given family $(f_t)_{t>0}$ of analytic functions $f_t \in \mathscr{C}^\omega(X)$
and an infinite discrete space $\Gamma \subseteq G$,
there exists a family $(W_t)_{t > 0}$ 
such that 
\begin{enumerate}
\item $W_t \in \mathscr{C}^\omega(X)$ for all $t>0$,\\
\item $W_t(\gamma) = f(\gamma)$ for all $\gamma \in \Gamma$ and all $t>0$,\\
\item $\partial_t W_t(x) = \Delta W_t(x)$ for
all $x\in X$ and all $t>0$.
\end{enumerate}
Fixing $t>0$,
there exists for every $x\in X$
an open neighborhood $U$ of $x$
such that, letting $\Gamma_U =\Gamma \cap U$, 
\[
W_t(z) =\sum_{\gamma \in \Gamma_U} a_\gamma p_t(z\gamma^{-1})
\]
for every $z \in U$ 
for constants $a_\gamma$ depending only on $U$.
If $G$ is a complex Lie group then in fact 
\[
W_t(z) = \sum_{\gamma \in \Gamma_U}
a_\gamma 
(4\pi t)^{-n/2}e^{-|\rho|^2 t - |H(z\gamma^{-1})|^2/4t} \prod_{\alpha \in \Delta_+} \bigg( \frac{\operatorname{sinh}\langle \alpha, H(z\gamma^{-1}) \rangle }{\langle \alpha, H(z\gamma^{-1}) \rangle}\bigg)^{-1/2}.
\]
\end{theorem}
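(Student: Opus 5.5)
The plan is to imitate the one-dimensional argument leading to the Corollary after Proposition \ref{glue}. Local interpolants are built from heat-kernel coherent states, and these are then glued over a locally finite atlas. Fix $t>0$ and choose a locally finite cover $X=\bigcup_{\alpha\in A}U_\alpha$ by relatively compact open sets. This is possible since $X\simeq\mathfrak{p}$ is a manifold, for instance by taking geodesic balls of radius $1$ centred at a maximal $1/2$-separated net. Because $\Gamma$ is discrete (hence closed) and each $\overline{U_\alpha}$ is compact, every $\Gamma_\alpha:=\Gamma\cap U_\alpha=\{\gamma_1,\dots,\gamma_m\}$ is finite.

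On each chart we set
\[
W_{\alpha,t}(z)=\sum_{i=1}^m a^{(\alpha)}_{i}(t)\,p_t(z,\gamma_i), \qquad a^{(\alpha)}_i(t)=\sum_{j} f_t(\gamma_j)\,K_t^{ij},
\]
where $(K_t^{ij})$ is the inverse of the Gram matrix $K_{t,ij}=p_t(\gamma_i,\gamma_j)$. This is exactly the orthogonal-projection formula of the subsection on orthogonal projections. The Gram matrix is invertible by the Lemma asserting that the heat kernel is strictly positive definite on $X$. Consequently $W_{\alpha,t}(\gamma_j)=f_t(\gamma_j)$ for every $\gamma_j\in\Gamma_\alpha$, and $W_{\alpha,t}$ is analytic because $p_t(\cdot,\gamma)$ is analytic. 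Writing $p_t(z,\gamma)=p_t(\gamma^{-1}z)$ gives the asserted local form, up to the notational convention $p_t(z\gamma^{-1})$ in the statement. In the complex case we substitute \eqref{Gcomplex} with $H=H(z\gamma^{-1})$, the $\exp\bar{\mathfrak a}_+$-component, which yields the closed formula. In that case the Lemma on $\kappa$ gives an independent check of invertibility.

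Next we apply Proposition \ref{glue} to the family $(W_{\alpha,t})_\alpha$. This produces $W_t$ with $W_t|_\Gamma=f_t|_\Gamma$. The proof of Proposition \ref{glue} shows more: on $V_x=\bigcap_{x\in U_\alpha}U_\alpha$ the function $W_t$ equals a fixed finite average $\frac1{g(x)}\sum_{\alpha\ni x}W_{\alpha,t}$. Since each summand is a finite combination of $p_t(\cdot,\gamma)$, so is the average, with coefficients depending only on $V_x$. Taking $U=V_x$ gives analyticity near $x$ and the local series representation.

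The main obstacle is property (3), the heat equation. The coefficients $a^{(\alpha)}_i(t)$ depend on $t$ through $f_t$ and through $K_t^{-1}$, so in general $\partial_t W_{\alpha,t}\neq\Delta W_{\alpha,t}$. On $V_x$ we have
\[
\partial_t W_t-\Delta W_t=\sum_{\gamma}\dot a_\gamma(t)\,p_t(\cdot,\gamma),
\]
because $p_t(\cdot,\gamma)$ itself solves the heat equation. This vanishes exactly when $\dot a_\gamma=0$. My first approach is therefore to fix a reference time $t_0$ and use the $t_0$-coefficients for all $t$, i.e.\ $W_t=\sum_\gamma a_\gamma(t_0)p_t(\cdot,\gamma)$. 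Then (3) holds exactly by linear independence and the semigroup property, but interpolation holds only at $t=t_0$.

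For a genuinely simultaneous statement I would instead read (2) in the sense used in the paper's $\R$-case Corollary, where the interpolated data are $f_t$ for each fixed $t$ separately. I would prove (1), (2) and the local formula pointwise in $t$, and prove (3) in the local form above on the open set where the gluing index is constant. I would point out explicitly that reconciling (2) and (3) for all $t$ at once requires the time dependence of the data $f_t|_\Gamma$ to be compatible with constant coefficients, meaning $f_t(\gamma)=\sum_{\gamma'} a_{\gamma'}p_t(\gamma,\gamma')$. This compatibility is where I expect the real difficulty, and possibly a needed extra hypothesis, to lie.
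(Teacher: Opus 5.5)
\paragraph{Verdict.} Your construction is the paper's own. Its proof is the single line ``apply Proposition \ref{glue} to the functions constructed in Section \ref{heatsection}'': local Gram-matrix interpolants by heat-kernel coherent states, invertible by the strict positive definiteness lemma, glued by averaging. The proposal nevertheless has genuine gaps: the one you flagged in (3), and one in the gluing that you did not flag. Both are inherited from the paper.

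\paragraph{Property (3).} The obstruction you isolate is genuine, and it is a defect of the paper's proof rather than of yours. Already on $\R$ the paper's coefficients $a_n$ solve a $t$-dependent linear system, yet it asserts $\partial_t W=\Delta_x W$. By your identity $\partial_tW_t-\Delta W_t=\sum_\gamma \dot a_\gamma(t)\,p_t(\cdot,\gamma)$ and linear independence, (3) forces $\dot a_\gamma=0$. Nothing inside this ansatz then rescues (2).
\begin{itemize}
\item Reading (2) as $W_t(\gamma)=f_t(\gamma)$, (2) and (3) are incompatible outright whenever $t\mapsto f_t(\gamma)$ is not differentiable.
\item Reading (2) literally, with $t$-independent data $f(\gamma)\neq 0$, constant coefficients also fail. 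Cauchy--Schwarz in $p_t=p_{t/2}*p_{t/2}$ gives $|p_t(x,y)|\le p_t(x,x)$, and $p_t(x,x)\to 0$ as $t\to\infty$.
\end{itemize}
So your proposal, like the paper, does not prove the statement as written. Your frozen-coefficient variant is the honest salvage: it gives (3), but (2) only at $t=t_0$.

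\paragraph{The gluing.} Your step ``on $V_x$ the function $W_t$ is a fixed finite average'' rests on the lemma preceding Proposition \ref{glue}, and that lemma is false. For $v\in V_x$ its argument yields only $\{\alpha: x\in U_\alpha\}\subseteq\{\alpha: v\in U_\alpha\}$, since $v$ may lie in further charts.

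For a concrete failure, take $U_k=(k-1,k+1)$ for $k\in\Z$ and $\Gamma=\Z$. Then $V_0=U_0$ and $g(0)=1\neq 2=g(1/2)$. The glued function equals $f(0)$ at $0$ but tends to $\tfrac12\bigl(f(0)+e^{-1/(4t)}f(1)\bigr)$ as $x\searrow 0$.

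In general $g$ jumps at every point of $\partial U_\alpha$. So with your cover by balls the averaged $W_t$ is generically discontinuous, and analyticity and the local formula fail even at fixed $t$.

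This cannot be patched while keeping the conclusions. Suppose $W_t$ is real-analytic on the connected space $X$ and agrees with a finite sum $\sum_{\gamma\in\Gamma_U}a_\gamma p_t(\cdot,\gamma)$ on some nonempty open $U$. Analytic continuation makes the identity global, so $W_t(z)\to 0$ as $z\to\infty$. Hence $W_t$ cannot interpolate $f\equiv 1$ on an infinite closed discrete $\Gamma$.

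A smooth partition of unity restores smoothness and interpolation, but it gives up analyticity, (3) and the local form. What the method does deliver is the case of finite $\Gamma$ at fixed $t$: a single Gram inversion over all of $\Gamma$, with no gluing. Infinite $\Gamma$ would require inverting the infinite Gram matrix $(p_t(\gamma_i,\gamma_j))$, which is a different argument.

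\paragraph{Smaller slips.} Discrete does not imply closed, and you need closedness for $\Gamma\cap U_\alpha$ to be finite. The $\kappa$-lemma is no independent check of invertibility. The Gram entries $p_t(\gamma_j^{-1}\gamma_i)$ are diagonal values $\kappa(e^H,e^H)$ at the Cartan component $H$ of $\gamma_j^{-1}\gamma_i$, not values $\kappa(e^{H_i},e^{H_j})$.
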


\begin{proof}
Apply proposition \ref{glue} 
to the functions constructed in Section \ref{heatsection}. 
\end{proof}

\begin{corollary} Let $G$ be complex.
Fix a F{\o}lner sequence $(K_n)_{n \in \N}$  in $G$ with $G = \cup_{n\in \N} K_n$ \cite{Folner},
and suppose there is some $r_0 > 0$ such that if $r> r_0$ then 
\begin{equation} \label{Effiecond}    
\Gamma \cap B_{r}(x) \neq \varnothing
\end{equation}
for every $x\in X$. For all $t>0$, we have 
for every $z\in G/K$ the 
pointwise convergence
\begin{align*}
f_t(z) = \lim_{n \to \infty} \Bigg( \frac{1}{(2 \sqrt{\pi} \sqrt{ t })^n}
\sum_{\gamma_{k,n} \in \Gamma_n} a_{k,n} 
e^{-|\rho|^2 t- |H(z\gamma_{k,n}^{-1})|^2/4t} \prod_{\alpha \in \Delta_+} \Big( \frac{\operatorname{sinh}\langle \alpha, H(z\gamma_{k,n}^{-1}) \rangle }{\langle \alpha, H(z\gamma_{k,n}^{-1}) \rangle}\Big)^{-1/2} \Bigg).
\end{align*}
\end{corollary}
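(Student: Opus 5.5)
The proof I propose runs through finite interpolation on an exhaustion of $X$, followed by a pointwise limit. Since $G$ is complex, $X=G/K$ is a connected Lie group quotient with polynomial-type geometry only in a weak sense. Still, we are handed a F{\o}lner sequence $(K_n)$ with $\bigcup_n K_n = G$, and I take $\Gamma_n := \Gamma\cap K_nK/K$, which is a finite set since $\Gamma$ is discrete and we may assume the $K_n$ are relatively compact.

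For each $n$, the strict positive definiteness of the heat kernel (the Lemma of Section \ref{heatsection}) makes the Gram matrix $\bigl(p_t(\gamma_{j,n},\gamma_{k,n})\bigr)_{j,k}$ invertible. This gives coefficients $a_{k,n}=\sum_j f_t(\gamma_{j,n})K_n^{kj}$ exactly as in the orthogonal-projection construction of Section \ref{sec:pre}. Writing $p_t(z\gamma^{-1})$ by the closed formula \eqref{Gcomplex}, the $n$-th term inside the limit is precisely $W_{t,n}(z)=\sum_k a_{k,n}p_t(z,\gamma_{k,n})$. This is the orthogonal projection of the coherent state picture onto $V_{\Gamma_n}$ in the reproducing kernel Hilbert space $\mathscr{H}_t$ with kernel $p_t$. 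That space is the range of $e^{(t/2)\Delta}$ on $L^2(X)$, carrying the transported norm.

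The convergence I would then argue in $\mathscr{H}_t$. First I would assume, or arrange, that $f_t\in\mathscr{H}_t$; this is the natural reading of the hypothesis that $f_t$ is analytic, for instance $f_t=e^{t\Delta}g$. The spaces $V_{\Gamma_n}$ are increasing, so $W_{t,n}=P_{V_{\Gamma_n}}f_t$ converges in norm to $P_V f_t$, where $V=\overline{\bigcup_n V_{\Gamma_n}}$. Norm convergence in a reproducing kernel Hilbert space implies pointwise convergence, because $|h(z)|\le \|h\|\,p_t(z,z)^{1/2}$. It then remains to show $V=\mathscr{H}_t$, that is, $\{p_t(\cdot,\gamma):\gamma\in\Gamma\}$ is complete. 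Equivalently, I need that $h\in\mathscr{H}_t$ with $h|_\Gamma=0$ forces $h=0$.

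This completeness step is the main obstacle, and it is where the relative density condition \eqref{Effiecond} must enter. My first attempt would be a sampling inequality: show $\|h\|_{\mathscr{H}_t}^2\lesssim\sum_{\gamma\in\Gamma}|h(\gamma)|^2$ for $h\in\mathscr{H}_t$, in the style of Christensen--\'Olafsson. To do this I would use the local oscillation estimate $|h(x)-h(\gamma)|\le C\,d(x,\gamma)\sup_{B}|\nabla h|$, controlled through the Gaussian decay of $p_t$ in \eqref{Gcomplex}. I expect this to close only when the covering radius $r_0$ is small relative to $\sqrt t$, so I would be prepared to add that restriction. Alternatively, I would fall back on analyticity: $h=e^{(t/2)\Delta}u$ extends holomorphically to a tube, and vanishing on an $r_0$-dense set with Gaussian growth bounds should force $h\equiv0$ by a Carlson-type uniqueness argument along geodesics. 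The F{\o}lner property would be used only to guarantee that the exhaustion by the $\Gamma_n$ eventually captures every $\gamma$, together with uniform control of boundary effects.
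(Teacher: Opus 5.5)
Your reduction is the same as the paper's implicit argument. The $n$-th expression is the minimal-norm interpolant $P_{V_{\Gamma_n}}f_t$ in $\mathscr{H}_t=e^{(t/2)\Delta}L^2(X)$. Everything then hinges on the completeness of $\{p_t(\cdot,\gamma)\}_{\gamma\in\Gamma}$ in $\mathscr{H}_t$, which the paper simply asserts from the density condition by citing \cite{Effie}.

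\textbf{The gap.} You leave this completeness step open, and it cannot be closed, because it is false. The space $\mathscr{H}_t$ has infinite bandwidth: it contains every Paley--Wiener space $PW_R(X)$. So no relative density condition, with any radius, makes $\Gamma$ a uniqueness set.

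\emph{Flat case.} On $\R$ one has $\|h\|_{\mathscr{H}_t}^2\propto\int|\widehat h(\xi)|^2e^{t\xi^2}\,d\xi$. For every $c>0$, the function $h(x)=\sin(\pi x/c)e^{-ax^2}$ with $0<a<1/(2t)$ is a nonzero element of $\mathscr{H}_t$ vanishing on $c\Z$.

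\emph{The corollary's own setting.} Take $G=\SL_2(\C)$ and $X=\mathbb{H}^3$. There $\varphi_\lambda(r)=\sin(\lambda r)/(\lambda\sinh r)$ and $|\mathbf{c}(\lambda)|^{-2}\propto\lambda^2$. Hence a radial $h=g(r)/\sinh r$ with $g$ odd satisfies $\|h\|_{\mathscr{H}_t}^2\propto\int_0^\infty|\widehat g(\lambda)|^2e^{t\lambda^2}\,d\lambda$. So $h(r)=\sin(\omega r)e^{-ar^2}/\sinh r$ with $a<1/(2t)$ lies in $\mathscr{H}_t$ and vanishes on the spheres $d(o,x)=k\pi/\omega$, $k\geq1$. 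Let $\Gamma$ be a maximal $\varepsilon$-separated subset of the union of these spheres. It is discrete and satisfies the density condition with $r_0=\pi/\omega+\varepsilon$, which is as small as you like, independently of $t$. With $f_t=h$, every interpolant vanishes identically, so the limit is $0\neq f_t$.

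\textbf{Consequences for your two routes.}
\begin{enumerate}
\item The sampling inequality $\|h\|^2\lesssim\sum_\gamma|h(\gamma)|^2$ fails however small $r_0$ is relative to $\sqrt t$.
\item The Carlson-type route fails as well. Carlson's theorem needs exponential type below $\pi/c$. Elements of $\mathscr{H}_t$ satisfy only the Gaussian bound $|h(x+iy)|\lesssim e^{y^2/(2t)}$, and $\sin(\pi z/c)e^{-az^2}$ obeys that bound.
\end{enumerate}

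\textbf{On your added hypothesis.} Assuming $f_t\in\mathscr{H}_t$ is necessary: for merely analytic $f_t$ the interpolants depend only on $f_t|_\Gamma$, and adding any analytic function vanishing on $\Gamma$ changes $f_t$ but not the limit. As shown above, it is not sufficient.

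A correct statement needs either a bandwidth restriction or an explicit uniqueness hypothesis. One option is to replace $\mathscr{H}_t$ by $PW_\Omega(X)$ with $\Omega$ bounded and $r_0$ small in terms of $\Omega$; there, sampling theorems for band-limited functions give completeness. The other is to assume outright that $\Gamma$ is a uniqueness set for $\mathscr{H}_t$.

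\textbf{Two smaller points.}
\begin{enumerate}
\item Convergence of $P_{V_{\Gamma_n}}f_t$ needs the $K_n$ nested, or at least that each finite subset of $\Gamma$ lies in $\Gamma_n$ for all large $n$.
\item Since $X$ is of noncompact type, $G$ is non-amenable and has no F{\o}lner sequence at all. Only the exhaustion property is usable, which is in effect all you use.
\end{enumerate}
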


For a fixed $k$, 
note that each coefficient 
in the sequence $(a_{k,n})$
will generally be different for each $n\geq k$ 
as $n$ varies through the positive integers greater than $k$.
The density condition \eqref{Effiecond}, forces the coherent states through $\gamma \in \Gamma$ to span a dense subspace, cf. \cite[p. 1486]{Effie}. We note that orthogonal projections onto the linear span of coherent states, denoted above as $V_\Gamma$, are useful
for different purposes in the proofs throughout \cite{Effie}.

\bibliographystyle{amsplain} 
\bibliography{references.bib}


\end{document}